\newtheorem{theorem}{Theorem}[section]
\newtheorem{corollary}{Corollary}[section]
\newtheorem{definition}{Definition}[section]
\newtheorem{example}{Example}[section]
\newtheorem{proposition}{Proposition}[section]
\newtheorem{remark}{Remark}[section]
\author{Vladimir Rovenski\footnote{Department of Mathematics, University of Haifa,
 Israel
       \newline e-mail: {\tt vrovenski@univ.haifa.ac.il}
       } }
\title{Geometry of a weak para-$f$-structure}
\begin{document}

\date{}

\maketitle

\begin{abstract}
We study the geometry of the weak almost para-$f$-structure and its satellites.
This allow us to produce totally geodesic foliations and Killing vector fields and
also to take a fresh look at the para-$f$-structure introduced by A.\,Bucki and A.\,Miernowski.
We demonstrate this by generalizing several known results on almost para-$f$-manifolds.
First, we express the covariant derivative of $f$ using a new tensor on a metric weak para-$f$-structure,
then we prove that on a weak para-${\cal K}$-manifold the characteristic vector fields are Killing and $\ker f$ defines a totally geodesic foliation.
Next, we show that a para-${\cal S}$-structure is rigid (i.e., a weak para-${\cal S}$-structure is a para-${\cal S}$-structure),
and that a metric weak para-$f$-structure with parallel tensor $f$ reduces to a weak para-${\cal C}$-structure.
We obtain corollaries for $p=1$, i.e., for a weak almost paracontact~structure.

\vskip1.5mm\noindent
\textbf{Keywords}: para-$f$-structure; distribution; totally geodesic foliation; Killing vector field

\vskip1.5mm
\noindent
\textbf{Mathematics Subject Classifications (2010)}  53C15, 53C25, 53D15
\end{abstract}


\section*{Introduction}

A distribution (or a foliation, associated with integrable distribution) on a pseudo-Riemannian manifold
is \textit{totally geodesic} if any geodesic of a manifold that is tangent to the distribution at one point is tangent to it at all points.
Such foliations have the simplest extrinsic geometry of the leaves and appear in Riemannian geometry, e.g., in the theory of $\mathfrak{g}$-foliations,
as kernels of degenerate tensors, e.g., \cite{AM-1995,FP-2017}.
We are motivated by the problem of finding structures on manifolds, which lead to totally geodesic foliations and Killing vector fields,
see~\cite{fip}.
A well-known source of totally geodesic foliations
is
a~para-$f$-structure on a smooth manifold $M^{2n+p}$, defined
using (1,1)-tensor field $f$ satisfying $f^3 = f$ and having constant rank $2n$, see \cite{BN-1985,m1976}.
The~paracontact geometry (a counterpart to the contact geometry)
is a higher dimensional analog of almost product ($p=0$) \cite{g1967}, and almost paracontact ($p=1$) structures  \cite{CFG-survey}.
A~para-$f$-structure with $p=2$ arises in the study of hypersurfaces in almost contact manifolds, e.g., \cite{BL-69}.
Interest in para-Sasakian manifolds is due to their connection with para-K\"{a}hler manifolds and their role in mathematical~physics.
If there exists a set of vector fields $\xi_1, \ldots , \xi_p$ with certain properties, then $M^{2n+p}$ is said to
have a para-$f$-structure with complemented frames.
In this case, the tangent bundle $TM$ splits into three complementary subbundles:
$\pm1$-eigen-distributi\-ons for $f$ composing a $2n$-dimensional distribution $f(TM)$ and a $p$-dimensional distribution $\ker f$
(the kernel of $f$).

In \cite{RWo-2}, we introduced the ``weak" metric structures that generalize
an $f$-structure and a para-$f$-structure, and allow us to take a fresh look at the classical theory.
In~\cite{Rov-arxiv}, we studied geometry of a weak $f$-structure and its satellites that are analogs of ${\mathcal K}$- ${\mathcal S}$- and ${\mathcal C}$- manifolds.
In~this paper, using a similar approach, we study geometry of a weak para-$f$-structure and its important cases
related to a pseudo-Riemannian manifold endowed with a totally geodesic foliation.
A~natu\-ral question arises: {how rich are weak para-$f$-structures compared to the classical ones}?
We~study this question for weak analogs of para-${\mathcal K}$-, para-${\mathcal S}$- and para-${\mathcal C}$- structures.
The proofs of main results use the properties of new tensors, as well as the constructions required in the classical~case.
The~theory presented here can be used to deepen our knowledge of pseudo-Riemannian geometry of
manifolds equipped with distributions.

This article consists of an introduction and five sections.
In Section~\ref{sec:1}, we discuss the properties of ``weak" metric structures generalizing some classes of para-$f$-manifolds.
In Section~\ref{sec:2} we express the covariant derivative of $f$ of a weak para-$f$-structure using a new tensor
and show that on a weak para-${\mathcal K}$-manifold the characteristic vector fields are Killing and $\ker f$ defines a totally geodesic foliation.
Also, for a weak almost para-${\mathcal C}$-structure and a weak almost para-${\mathcal S}$-structure, $\ker f$ defines a totally geodesic foliation.
In Section~\ref{sec:3a}, we apply to weak almost para-${\mathcal S}$-manifolds the tensor~$h$ and prove stability of some known results.
In Section~\ref{sec:3} we complete the result in \cite{RWo-2} and prove the rigidity theorem that a weak para-${\mathcal S}$-structure is a para-${\mathcal S}$-structure.
In Section~\ref{sec:4}, we show that a weak para-$f$-structure with parallel tensor $f$ reduces to a weak para-${\mathcal C}$-structure,
we also give an example of such a structure.

\section{Preliminaries}
\label{sec:1}

Here, we describe ``weak" metric structures generalizing certain classes of para-$f$-manifolds and discuss their properties.
A \textit{weak para-$f$-structure} on a smooth manifold $M^{\,2n+p}$ is defined by a $(1,1)$-tensor field $f$ of rank $2\,n$
and a~nonsingular $(1,1)$-tensor field $Q$ satisfying, see \cite{RWo-2},
\begin{equation}\label{E-fQ-1}
 f^3 - fQ = 0,\qquad
 Q\,\xi=\xi\quad (\xi\in\ker f).
\end{equation}
If $\ker f=\{X\in TM: f(X)=0\}$ is parallelizable, then we fix vector fields $\xi_i\ (1\le i\le p)$, which span $\ker f$,
and their dual one-forms $\eta^i$. We get a~\textit{weak almost para-$f$-structure}
(a weak almost paracontact structure for $p=1$), see~\cite{RWo-2},
\begin{equation}\label{2.1}
 f^2 = Q -\sum\nolimits_{i}\eta^i\otimes\xi_i, \quad \eta^i(\xi_j)=\delta^i_j \,.
\end{equation}
Using \eqref{2.1} we get $f(TM)=\bigcap_{i}\ker\eta^i$ and that $f(TM)$ is $f$-invariant, i.e.,
\begin{equation}\label{2.1-D}
 {f} X\in f(TM),\quad X\in f(TM).
\end{equation}
By \eqref{2.1}-\eqref{2.1-D}, $f(TM)$ is invariant for $Q$.
A weak almost $f$-structure is called \textit{normal} if the following tensor
(known for $Q={\rm id}_{TM}$, e.g., \cite{FP-2017}) is identically~zero:
\begin{align}\label{2.6X}
 N^{\,(1)}(X,Y) = [{f},{f}](X,Y) - 2\sum\nolimits_{i} d\eta^i(X,Y)\,\xi_i .
\end{align}
The Nijenhuis torsion
of ${f}$ and the exterior derivative
of $\eta^i$ are given~by
\begin{align}\label{2.5}
 [{f},{f}](X,Y) & = {f}^2 [X,Y] + [{f} X, {f} Y] - {f}[{f} X,Y] - {f}[X,{f} Y],\ X,Y\in\mathfrak{X}_M , \\
\label{3.3A}
 d\eta^i(X,Y) & = \frac12\,\{X(\eta^i(Y)) - Y(\eta^i(X)) - \eta^i([X,Y])\},\quad X,Y\in\mathfrak{X}_M .
\end{align}

\begin{remark}\rm
A differential $k$-\textit{form} on a smooth manifold $M$ is a skew-symmetric tensor field
$\omega$ of  type $(0, k)$. According to the conventions of
\cite{KN-69},
\begin{eqnarray}\label{eq:extdiff}
\nonumber
 & d\omega ({X}_1, \ldots , {X}_{k+1}) = \frac1{k+1}\sum\nolimits_{\,i=1}^{k+1} (-1)^{i+1} {X}_i(\omega({X}_1, \ldots , \widehat{{X}}_i\ldots, {X}_{k+1}))\\
 & +\sum\nolimits_{\,i<j}(-1)^{i+j}\,\omega ([{X}_i, {X}_j], {X}_1, \ldots,\widehat{{X}}_i,\ldots,\widehat{{X}}_j, \ldots, {X}_{k+1}),
\end{eqnarray}
where ${X}_1,\ldots, {X}_{k+1}\in\mathfrak{X}_M$ and $\,\widehat{\cdot}\,$ denotes the
operator of omission, defines a $(k+1)$-form $d\omega$ -- the \textit{exterior differential} of $\omega$.
Thus, \eqref{eq:extdiff} with $k=1$ gives~\eqref{3.3A}.
\end{remark}

If there exists a pseudo-Riemannian metric $g$ such that
\begin{align}\label{2.2}
 g({f} X,{f} Y)= -g(X,Q\,Y) +\sum\nolimits_{i}
 \eta^i(X)\,\eta^i(Y),\quad X,Y\in\mathfrak{X}_M,
\end{align}
then $({f},Q,\xi_i,\eta^i,g)$ is called a {\it metric weak para-$f$-structure},
$M({f},Q,\xi_i,\eta^i,g)$ is called a \textit{metric weak para-$f$-manifold}, and $g$ is called a \textit{compatible metric}.
 Putting $Y=\xi_i$ in \eqref{2.2} and using \eqref{E-fQ-1}, we get
  $g(X,\xi_i) = \eta^i(X)$,
thus, $f(TM)\,\bot\,\ker f$ and $\{\xi_i\}$ is an orthonormal frame of $\ker f$.

\begin{remark}\rm
According to \cite{RWo-2}, a weak almost para-$f$-structure admits a compatible pseudo-Riemannian metric if ${f}$
admits a skew-symmetric representation, i.e., for any $x\in M$ there exist a neighborhood $U_x\subset M$ and a~frame $\{e_k\}$ on $U_x$,
for which ${f}$ has a skew-symmetric matrix.
\end{remark}

The following statement is well-known for the case of $Q={\rm id}_{TM}$.

\begin{proposition}
{\rm (a)}
For a weak almost para-$f$-structure the following hold:
\[
 {f}\,\xi_i=0,\quad \eta^i\circ{f}=0,\quad \eta^i\circ Q=\eta_i\quad (1\le i\le p),\quad [Q,\,{f}]=0.
\]
{\rm (b)}
For a metric weak almost para-$f$-structure
the tensor ${f}$ is skew-symmetric and the tensor $Q$ is self-adjoint, i.e.,
\begin{equation}\label{E-Q2-g}
 g({f} X, Y) = -g(X, {f} Y),\quad
 g(QX,Y)=g(X,QY).
\end{equation}
\end{proposition}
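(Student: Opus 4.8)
The plan is to prove the four identities of part~(a) as a short chain --- each feeding the next --- and then to obtain part~(b) by ``symmetrizing'' \eqref{2.2} and making one substitution.

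For part~(a) I would begin with $f\,\xi_i=0$, which is immediate since the $\xi_i$ span $\ker f$ by construction (equivalently: apply $f$ to \eqref{2.1}, cancel $f^3-fQ$ via \eqref{E-fQ-1} to get $\sum_i\eta^i\otimes f\xi_i=0$, and evaluate on $\xi_j$ using $\eta^i(\xi_j)=\delta^i_j$). The key structural fact is $\eta^i\circ f=0$: since $f(TM)=\bigcap_j\ker\eta^j$ (recorded in the text as a consequence of \eqref{2.1}), every $fX$ lies in $\ker\eta^i$, so $\eta^i(fX)=0$; if one wants this from scratch, first check $\ker f^2=\ker f$ --- from $f^2X=0$ and \eqref{2.1} one gets $QX=\sum_j\eta^j(X)\xi_j$, and applying $Q^{-1}$ with $Q\xi_j=\xi_j$ forces $X=\sum_j\eta^j(X)\xi_j\in\ker f$ --- so $f$ and $f^2$ share rank $2n$ and image $f(TM)$, which must then be the common kernel $\bigcap_j\ker\eta^j$. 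Next, composing \eqref{2.1} with $\eta^i$ on the left and using $\eta^i(\xi_j)=\delta^i_j$ gives $\eta^i\circ f^2=\eta^i\circ Q-\eta^i$; since $\eta^i\circ f^2=(\eta^i\circ f)\circ f=0$, this reads $\eta^i\circ Q=\eta^i$. Finally, rewriting \eqref{2.1} as $Q=f^2+\sum_j\eta^j\otimes\xi_j$ gives $[Q,f]=[f^2,f]+\sum_j[\,\eta^j\otimes\xi_j,\,f\,]$; the first bracket is $f^3-f^3=0$, and the $j$-th term of the second sends $X$ to $\eta^j(fX)\,\xi_j-\eta^j(X)\,f\xi_j=0$ by the two identities just shown, so $[Q,f]=0$.

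For part~(b): the left-hand side $g(fX,fY)$ of \eqref{2.2} is symmetric in $X,Y$, and so is $\sum_i\eta^i(X)\eta^i(Y)$, hence $g(X,QY)$ is symmetric, i.e.\ $g(X,QY)=g(Y,QX)=g(QX,Y)$, which is the self-adjointness of $Q$. For skew-symmetry of $f$ I would use $g(X,\xi_i)=\eta^i(X)$ (derived in the text), so that $g(fX,\xi_i)=\eta^i(fX)=0$ by part~(a); then, substituting $fY$ for $Y$ in \eqref{2.2}, the $\eta$-sum drops because $\eta^i\circ f=0$, and expanding $f^2Y$ via \eqref{2.1} with $g(fX,\xi_j)=0$ leaves $g(fX,QY)=-g(X,QfY)$. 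Now $[Q,f]=0$ converts $QfY$ into $fQY$, and since $Q$ is non-singular $Z:=QY$ is arbitrary, so $g(fX,Z)=-g(X,fZ)$ for all $X,Z$.

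The main obstacle --- the point where the ``weak'' case departs from the classical one $Q={\rm id}_{TM}$ --- is that $\eta^i\circ Q=\eta^i$ and $[Q,f]=0$ are no longer automatic: everything pivots on the single identity $\eta^i\circ f=0$ (equivalently $f(TM)=\bigcap_i\ker\eta^i$), after which the rest of~(a) is formal. Similarly, the skew-symmetry of $f$ in~(b) does \emph{not} come from merely reading off \eqref{2.2}; it genuinely needs the substitution $Y\mapsto fY$ together with the invertibility of $Q$.
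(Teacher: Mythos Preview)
Your proof is correct. The overall plan matches the paper's, but the execution of part~(b) differs in a way worth noting. The paper works on the subbundle $f(TM)$: it first observes that \eqref{2.2} makes $Q|_{f(TM)}$ self-adjoint and extends via $Q\xi_i=\xi_i$; for skew-symmetry it uses that $f|_{f(TM)}$ is a bijection, writing $\tilde Y=fY$ and computing $g(fX,\tilde Y)=g(fX,fY)=-g(X,QY)=-g(X,f^2Y)=-g(X,f\tilde Y)$ for $X,Y\in f(TM)$, the $\ker f$ component being handled by $g(fX,\xi_i)=\eta^i(fX)=0$. You instead work globally --- reading off the self-adjointness of $Q$ directly from the manifest symmetry of \eqref{2.2} in $X,Y$, and for skew-symmetry substituting $Y\mapsto fY$ and invoking the invertibility of $Q$ rather than of $f|_{f(TM)}$. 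The paper's route leans on the splitting $TM=f(TM)\oplus\ker f$; yours leans on the nonsingularity of $Q$, which is precisely the defining hypothesis of the weak setting. In part~(a), your bracket expansion of $Q=f^2+\sum_j\eta^j\otimes\xi_j$ for $[Q,f]=0$ is a minor variant of the paper's argument (computing $f^3X$ two ways on $f(TM)$), and you also spell out $\eta^i\circ Q=\eta^i$, which the paper's written proof leaves implicit.
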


\begin{proof}
(a) By \eqref{E-fQ-1} and \eqref{2.1}, ${f}^2\xi_i=0$.
Applying \eqref{E-fQ-1} to $f\xi_i$, we get $f\xi_i=0$.
To show $\eta^i\circ{f}=0$, note that $\eta^i({f}\,\xi_i)=\eta^i(0)=0$, and, using \eqref{2.1-D}, we get $\eta^i({f} X)=0$ for $X\in f(TM)$.
Next, using \eqref{2.1} and ${f}(Q\,\xi_i) = {f}\,\xi_i=0$, we get
\begin{align*}
 {f}^3 X  = {f}({f}^2 X) =  {f}\,QX -\sum\nolimits_{i}\eta^i(X)\,{f}\xi_i =  {f}\,QX,\\
 {f}^3 X  = {f}^2({f} X) = Q\,{f} X -\sum\nolimits_{i}\eta^i({f} X)\,\xi_i = Q\,{f} X
\end{align*}
for any $X\in f(TM)$. This and $[Q,\,{f}]\,\xi_i=0$ provide $[Q,\,{f}]=Q\,{f} - {f} Q = 0$.

(b) By~\eqref{2.2}, the~restriction $Q_{|\,f(TM)}$ is self-adjoint. This and \eqref{E-fQ-1} provide (\ref{E-Q2-g}b).
For any $Y\in f(TM)$ there is $\tilde Y\in f(TM)$ such that ${f}Y=\tilde Y$.
From \eqref{2.1} and \eqref{2.2} with $X\in f(TM)$ and $\tilde Y$ we get
\begin{eqnarray*}
 g(fX,\tilde Y) = g(fX, fY) \overset{\eqref{2.2}}= -g(X, QY) \overset{\eqref{2.1}}
 =  -g(X, f^2 Y) =  -g(X, f\tilde Y),
\end{eqnarray*}
and (\ref{E-Q2-g}a) follows.
\end{proof}

\begin{remark}\rm
For a weak almost para-$f$-structure, the tangent bundle
decomposes as
 $TM=f(TM)\oplus\ker f$, where $\ker f$ is a $p$-dimensional characte\-ristic distribution;
moreover,
if we assume that the symmetric tensor $Q$ is positive definite,
then $f(TM)$ decomposes into the sum of two $n$-dimensional subbundles: $f(TM)={\mathcal D}_+\oplus{\mathcal D}_-$,
corresponding to positive and negative eigenvalues of $f$,
and in this case we get
 $TM={\mathcal D}_+\oplus{\mathcal D}_-\oplus\ker f$.
\end{remark}

Define the difference
tensor $\widetilde{Q}$ (vanishing on a para-$f$-structure) by
\[
 \tilde{Q} = Q - {\rm id}_{TM}.
\]
By the above, $\widetilde{Q}\,\xi_i=0$ and $[\tilde{Q},{f}]=0$.

We can rewrite \eqref{2.5} in terms of the Levi-Civita connection $\nabla$ as
\begin{align}\label{4.NN}
 [{f},{f}](X,Y) = ({f}\nabla_Y{f} - \nabla_{{f} Y}{f}) X - ({f}\nabla_X{f} - \nabla_{{f} X}{f}) Y;
\end{align}
in particular, since ${f}\,\xi_i=0$,
\begin{align}\label{4.NNxi}
 [{f},{f}](X,\xi_i)= {f}(\nabla_{\xi_i}{f})X +\nabla_{{f} X}\,\xi_i -{f}\,\nabla_{X}\,\xi_i, \quad X\in \mathfrak{X}_M .
\end{align}
The {fundamental $2$-form} $\Phi$ on $M({f},Q,\xi_i,\eta^i,g)$ is defined by
\begin{align*}
 \Phi(X,Y)=g(X,{f} Y),\quad X,Y\in\mathfrak{X}_M.
\end{align*}
Since $\eta^1\wedge\ldots\wedge\eta^p\wedge\Phi^n\ne0$,
a metric weak para-${f}$-manifold is orientable.

\begin{definition}\rm
A metric weak para-$f$-structure
$({f},Q,\xi_i,\eta^i,g)$ is called a \textit{weak para-${\mathcal K}$-structure} if it is normal and the form $\Phi$ is closed, i.e., $d \Phi=0$.
 We define two subclasses of weak para-${\mathcal K}$-manifolds as follows:
\textit{weak para-${\mathcal C}$-manifolds} if $d\eta^i = 0$ for any $i$, and \textit{weak para-${\mathcal S}$-manifolds}~if
\begin{align}\label{2.3}
 d\eta^i = \Phi,\quad 1\le i\le p .
\end{align}
Omitting the normality condition, we get the following: a metric weak para-$f$-structure
is called
(i)~a \textit{weak almost para-${\mathcal S}$-structure} if \eqref{2.3} is valid;
(ii)~a \textit{weak almost para-${\mathcal C}$-structure}
if $\Phi$ and $\eta^i$ are closed forms.
\end{definition}

For $p=1$, weak para-${\mathcal C}$- and weak para-${\mathcal S}$- manifolds reduce to weak
para-cosymplectic manifolds and weak
para-Sasakian manifolds, respectively.
Recall the formulas with the Lie derivative $\pounds_{Z}$ in the $Z$-direction and $X,Y\in\mathfrak{X}_M$:
\begin{eqnarray}\label{3.3B}
 (\pounds_{Z}{f})X &  = & [Z, {f} X] - {f} [Z, X],\\
\label{3.3C}
 (\pounds_{Z}\,\eta^j)X & = & Z(\eta^j(X)) - \eta^j([Z, X]) , \\
\label{3.7}
\nonumber
 (\pounds_{Z}\,g)(X,Y) &= & Z(g(X,Y)) - g([Z, X], Y) - g(X, [Z,Y])\\
 & = & g(\nabla_{X}\,Z, Y) + g(\nabla_{Y}\,Z, X).
\end{eqnarray}
The following tensors are known in the theory of para-$f$-manifolds, e.g., \cite{FP-2017}:
\begin{align}
\label{2.7X}
 N^{\,(2)}_i(X,Y) &= (\pounds_{{f} X}\,\eta^i)Y - (\pounds_{{f} Y}\,\eta^i)X \overset{\eqref{3.3A}}= 2\,d\eta^i({f} X,Y) - 2\,d\eta^i({f} Y,X),  \\
\label{2.8X}
 N^{\,(3)}_i(X) &= (\pounds_{\xi_i}{f})X \overset{\eqref{3.3B}}= [\xi_i, {f} X] - {f} [\xi_i, X],\\
\label{2.9X}
 N^{\,(4)}_{ij}(X) &= (\pounds_{\xi_i}\,\eta^j)X \overset{\eqref{3.3C}}= \xi_i(\eta^j(X)) - \eta^j([\xi_i, X])
 = 2\,d\eta^j(\xi_i, X).
\end{align}
For $p=1$, the tensors \eqref{2.7X}--\eqref{2.9X} reduce to the following tensors on (weak) almost paracontact manifolds:
 $N^{\,(2)}(X,Y) = (\pounds_{\varphi X}\,\eta)Y - (\pounds_{\varphi Y}\,\eta)X, \
 N^{\,(3)} = \pounds_{\xi}\,\varphi,\
 N^{\,(4)} = \pounds_{\xi}\,\eta$ .

\begin{remark}\rm
Let $M^{2n+p}(\varphi,Q,\xi_i,\eta^i)$ be a framed weak para-$f$-manifold.
Consider the product manifold $\bar M = M^{2n+p}\times\mathbb{R}^p$, where $\mathbb{R}^p$ is a Euclidean space
with a basis $\partial_1,\ldots,\partial_p$, and define tensor fields $\bar f$ and $\bar Q$ on $\bar M$ putting
\begin{align*}
 \bar f(X, \sum a^i\partial_i) = (fX -\sum a^i\xi_i, \sum \eta^j(X)\partial_j),
 \quad
 \bar Q(X, \sum a^i\partial_i) = (QX, \sum a^i\partial_i) .
\end{align*}
Hence, $\bar f(X,0)=(fX,0)$, $\bar Q(X,0)=(QX,0)$ for $X\in\ker f$,
$\bar f(\xi_i,0)=(0,\partial_i)$, $\bar Q(\xi_i,0)=(\xi_i,0)$ and
$\bar f(0,\partial_i)=(-\xi_i,0)$, $\bar Q(0,\partial_i)=(0,\partial_i)$.
Then it is easy to verify that $\bar f^{\,2}=-\bar Q$. The tensors $N^{\,(i)}\ (i=1,2,3,4)$ appear when we use
the integrability condition $[\bar f, \bar f]=0$ of $\bar f$ to express the normality condition
of a weak almost para-$f$-structure.
\end{remark}

\section{The geometry of a metric weak para-$f$-structure}
\label{sec:2}

Here, we study the geometry of the characteristic distribution $\ker f$,
supplement the sequence of tensors \eqref{2.6X} and \eqref{2.7X}--\eqref{2.9X}
with a new tensor $N^{\,(5)}$ and calculate the covariant derivative of $f$ on a metric weak para-$f$-structure.

A distribution ${\mathcal D}\subset TM$ is \textit{totally geodesic} if and only if
its second fundamental form vanishes, i.e., $\nabla_X Y+\nabla_Y X\in{\mathcal D}$ for any vector fields $X,Y\in{\mathcal D}$ --
this is the case when {any geodesic of $M$ that is tangent to ${\mathcal D}$ at one point is tangent to ${\mathcal D}$ at all its points}.
Any integrable and totally geodesic distribution determines a totally geodesic foliation.
A foliation, whose orthogonal distribution is totally geodesic, is said to be a Riemannian foliation.
For example, a foliation is Riemannian if it is invariant under transformations (isometries) generated by Killing vector fields.
Note that $X = X^\top + X^\bot$, where $X^\top$ is the projection of the vector $X\in TM$ onto $f(TM)$,
and $X^\bot = \sum\nolimits_{i}\eta^i(X)\,\xi_i$.
The next statement generalizes
\cite[Proposition~3]{FP-2017},
i.e., $Q={\rm id}_{ TM}$.

\begin{proposition}\label{thm6.1}
Let a metric weak para-$f$-structure be normal. Then $N^{\,(3)}_i$ and $N^{\,(4)}_{ij}$ vanish~and
\begin{align}\label{3.1KK}
 N^{\,(2)}_i(X,Y) =\eta^i([\widetilde{Q} X,\,{f} Y]);
\end{align}
moreover, the characteristic distribution $\ker f$ is totally geodesic.
\end{proposition}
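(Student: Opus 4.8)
The plan is to extract every conclusion from the single hypothesis $N^{(1)}\equiv 0$ (metric compatibility will be needed only for the last assertion), by feeding the normality identity well-chosen arguments and splitting it along the orthogonal decomposition $TM=f(TM)\oplus\ker f$. First I substitute $Y=\xi_i$ into \eqref{2.6X}. Since $f\xi_i=0$, a direct computation from \eqref{2.5} and \eqref{2.8X} gives $[f,f](X,\xi_i)=f\,N^{(3)}_i(X)$, so normality reads $f\,N^{(3)}_i(X)=2\sum_j d\eta^j(X,\xi_i)\,\xi_j$. The left-hand side lies in $f(TM)$ and the right-hand side in $\ker f$, and these subbundles meet only in the zero section, so both sides vanish. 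Vanishing on the right gives $d\eta^j(X,\xi_i)=0$, hence $N^{(4)}_{ij}=0$ by \eqref{2.9X}; vanishing on the left gives $N^{(3)}_i(X)\in\ker f$. To upgrade the latter I compute, using $\eta^j\circ f=0$, that $\eta^j(N^{(3)}_i(X))=\eta^j([\xi_i,fX])=-N^{(4)}_{ij}(fX)=0$; since $\{\xi_j\}$ spans $\ker f$, this forces $N^{(3)}_i=0$.

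For the formula \eqref{3.1KK} I apply $\eta^i$ to the identity $N^{(1)}(fX,Y)=0$. Expanding $[f,f](fX,Y)$ by \eqref{2.5}, every summand other than $[f^2X,fY]$ is annihilated by $\eta^i$, because $\eta^i\circ f=0$ and, since $\eta^i\circ Q=\eta^i$, also $\eta^i\circ f^2=0$; hence $\eta^i([f^2X,fY])=2\,d\eta^i(fX,Y)$. Substituting $f^2X=\widetilde{Q}X+X-\sum_k\eta^k(X)\xi_k$ from \eqref{2.1}, expanding the bracket, and discarding the terms carrying $\eta^i([\xi_k,fY])$ (which vanish because all $N^{(4)}_{ij}=0$) yields $\eta^i([\widetilde{Q}X,fY])+\eta^i([X,fY])+(fY)(\eta^i(X))=2\,d\eta^i(fX,Y)$. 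Inserting this into the Lie-derivative expression \eqref{2.7X} for $N^{(2)}_i(X,Y)$, rewritten via \eqref{3.3A} and \eqref{3.3C}, the explicit derivative terms cancel and only $\eta^i([\widetilde{Q}X,fY])$ survives. I expect this to be the main obstacle: the para convention $f^2=Q-\sum_i\eta^i\otimes\xi_i$ (with the plus sign, unlike the almost contact $\varphi^2=-{\rm id}+\eta\otimes\xi$) together with the bookkeeping of the $\widetilde{Q}$-correction and the $\xi_k$-components of $f^2X$ must be tracked with care, or the cancellations will not close; for $Q={\rm id}$ one recovers the classical vanishing $N^{(2)}_i=0$.

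Finally, for total geodesy of $\ker f$: since $\{\xi_i\}$ is an orthonormal frame of $\ker f$ and $f(TM)=\{fW:W\in\mathfrak{X}_M\}$ is the orthogonal complement of $\ker f$, it suffices to show $g\big(\nabla_{\xi_i}\xi_j+\nabla_{\xi_j}\xi_i,\,fW\big)=0$ for all $i,j$ and all $W$. Rewriting $N^{(3)}_i$ through the Levi-Civita connection (cf. \eqref{4.NN}, \eqref{4.NNxi}) and using $N^{(3)}_i=0$ gives $(\nabla_{\xi_i}f)W=\nabla_{fW}\xi_i-f\nabla_W\xi_i$. Then \eqref{2.2} and $\eta^i\circ f=0$ give $g(\nabla_{\xi_i}\xi_j,fW)=-g\big(\xi_j,(\nabla_{\xi_i}f)W\big)=-g(\xi_j,\nabla_{fW}\xi_i)$, and hence $g\big(\nabla_{\xi_i}\xi_j+\nabla_{\xi_j}\xi_i,fW\big)=-\big(g(\xi_j,\nabla_{fW}\xi_i)+g(\xi_i,\nabla_{fW}\xi_j)\big)=-(fW)\big(g(\xi_i,\xi_j)\big)=0$. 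Thus $\nabla_{\xi_i}\xi_j+\nabla_{\xi_j}\xi_i\in\ker f$, i.e. $\ker f$ is totally geodesic.
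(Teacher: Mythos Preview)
Your proof is correct and follows essentially the same strategy as the paper: substitute $Y=\xi_i$ into $N^{(1)}=0$ to kill $N^{(4)}_{ij}$ and $N^{(3)}_i$, then substitute $X\mapsto fX$ and project onto $\xi_i$ to obtain \eqref{3.1KK}. The only notable variation is the totally geodesic step: the paper uses Cartan's formula to deduce $\pounds_{\xi_i}\eta^j=0$ from $d\eta^j(\xi_i,\cdot)=0$, then symmetrizes the identity $(\pounds_{\xi_i}\eta^j)X=g(X,\nabla_{\xi_i}\xi_j)+g(\nabla_X\xi_i,\xi_j)$ to get the slightly stronger conclusion $\nabla_{\xi_i}\xi_j+\nabla_{\xi_j}\xi_i=0$ (see \eqref{3.30}, used later in the paper), whereas your route through $N^{(3)}_i=0$ and $(\nabla_{\xi_i}f)W=\nabla_{fW}\xi_i-f\nabla_W\xi_i$ yields only the $f(TM)$-orthogonality of $\nabla_{\xi_i}\xi_j+\nabla_{\xi_j}\xi_i$, which is exactly what ``totally geodesic'' requires.
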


\begin{proof}
Assume $N^{\,(1)}(X,Y)=0$ for any $X,Y\in TM$. Taking $\xi_i$ instead of $Y$ and using the formula of Nijenhuis tensor \eqref{2.5}, we~get
\begin{eqnarray}\label{3.11}
 0 & =& [{f},{f}](X,\xi_i) - 2\sum\nolimits_{j} d\eta^j(X,\xi_i)\,\xi_j \notag\\
 & =& {f}^2[X,\xi_i] - {f}[{f} X,\xi_i] - 2\sum\nolimits_{j} d\eta^j(X,\xi_i)\,\xi_j.
\end{eqnarray}
For the scalar product of \eqref{3.11} with $\xi_j$, using
${f}\,\xi_i=0$, we~get
\begin{align}\label{3.11A}
 d\eta^j(\xi_i,\,\cdot)=0;
\end{align}
hence, $N^{\,(4)}_{ij}=0$, see \eqref{2.9X}.
Next, combining \eqref{3.11} and \eqref{3.11A}, we get
\begin{align*}
 0 = [{f},{f}](X,\xi_i) = {f}^2[X,\xi_i] - {f}[{f} X,\xi_i] = {f}\,(\pounds_{\xi_i}{f})X .
\end{align*}
Applying ${f}$ and using \eqref{2.1} and $\eta^i\circ{f}=0$, we achieve
\begin{eqnarray}\label{3.14}
\nonumber
 0 & = {f}^2 (\pounds_{\xi_i}{f})X
  = Q(\pounds_{\xi_i}{f})X - \sum\nolimits_{j}\eta^j((\pounds_{\xi_i}{f})X)\,\xi_j \\
 & =  Q(\pounds_{\xi_i}{f})X - \sum\nolimits_{j}\eta^j([\xi_i,{f} X])\,\xi_j.
\end{eqnarray}
Further, \eqref{3.11A} and \eqref{3.3A} yield
\begin{align}\label{3.11B}
	0=2\,d\eta^j({f} X, \xi_i)
	=({f} X)(\eta^j(\xi_i)) - \xi_i(\eta^j({f} X)) - \eta^j([{f} X, \xi_i])
	=\eta^j([\xi_i, {f} X]).
\end{align}
Since $Q$ is non-singular, from \eqref{3.14}--\eqref{3.11B} we get $\pounds_{\xi_i}{f}=0$, i.e, $N^{\,(3)}_i=0$, see~\eqref{2.8X}.
 Replacing $X$ by ${f} X$ in our assumption $N^{\,(1)}=0$ and using \eqref{2.5} and \eqref{3.3A}, we get
\begin{align}\label{2.6}
 0 &= g([{f},{f}]({f} X,Y) - 2\sum\nolimits_{j} d\eta^j({f} X,Y)\,\xi_j,\ \xi_i) \notag\\
 &= g([{f}^2 X,{f} Y],\xi_i) - ({f} X)(\eta^i(Y)) + \eta^i([{f} X,Y]) ,\quad 1\le i\le p.
\end{align}
Using \eqref{2.1} and
$[{f} Y, \eta^j(X) \xi_i] = ({f} Y)(\eta^j(X)) \xi_i + \eta^j(X)[{f} Y, \xi_i]$, we rewrite \eqref{2.6}~as
\begin{equation*}
 0 = \eta^i([QX, {f} Y]) -\sum \eta^j(X)\,\eta^i([\xi_j, {f} Y])
  + {f} Y(\eta^i(X)) - {f} X(\eta^i(Y)) + \eta^i([{f} X,Y]).
\end{equation*}
Since \eqref{3.11B} gives $\eta^i([{f} Y, \xi_j])=0$, the above equation becomes
\begin{align}\label{2.9}
 \eta^i([QX, {f} Y]) + ({f} Y)(\eta^i(X)) - ({f} X)(\eta^i(Y)) + \eta^i([{f} X,Y]) = 0.
\end{align}
Finally, combining \eqref{2.9} with \eqref{2.7X}, we get \eqref{3.1KK}.
Using the identity
\begin{align}\label{3.Ld}
 \pounds_{\xi_i}=\iota_{{\xi_i}}\,d + d\,\iota_{{\xi_i}},
\end{align}
from \eqref{3.11A} and $\eta^i(\xi_j)=\delta^i_j$ we obtain
 $\pounds_{\xi_i}\,\eta^j = d (\eta^j(\xi_i)) + \iota_{\xi_i}\, d\eta^j = 0$.
On the other hand, by \eqref{3.3C} we have
\[
 (\pounds_{\xi_i}\,\eta^j)X= g(X,\nabla_{\xi_i}\,\xi_j)+g(\nabla_{X}\,\xi_i,\,\xi_j),\quad
 X\in\mathfrak{X}_M.
\]
Symmetrizing this and using $\pounds_{\xi_i}\,\eta^j =0$ and $g(\xi_i,\, \xi_j)=\delta_{ij}$ yield
\begin{align}\label{3.30}
 \nabla_{\xi_i}\,\xi_j+\nabla_{\xi_j}\,\xi_i =0,
\end{align}
thus, the distribution $\ker f$ is totally geodesic.
\end{proof}

Recall the co-boundary formula for exterior derivative $d$ on a $2$-form $\Phi$,
\begin{eqnarray}\label{3.3}
\nonumber
 d\Phi(X,Y,Z) & =& \frac{1}{3}\,\big\{X\,\Phi(Y,Z) + Y\,\Phi(Z,X) + Z\,\Phi(X,Y) \\
 && -\Phi([X,Y],Z) - \Phi([Z,X],Y) - \Phi([Y,Z],X)\big\}.
\end{eqnarray}
By direct calculation we get the following:
\begin{align}\label{3.9A}
 (\pounds_{\xi_i}\,\Phi)(X,Y) = (\pounds_{\xi_i}\,g)(X, {f}Y) + g(X,(\pounds_{\xi_i}{f})Y) .
\end{align}

The following result generalizes \cite[Proposition~4]{FP-2017}.

\begin{theorem}\label{C-K}
On a weak para-${\mathcal K}$-manifold the vector fields $\xi_1,\ldots,\xi_p$ are Killing and
\begin{align}\label{6.1e}
 \nabla_{\xi_i}\,\xi_j  = 0,\quad 1\le i,j\le p ;
\end{align}
thus,
$\ker f$ is integrable and defines a totally geodesic Riemannian foliation with flat leaves.
\end{theorem}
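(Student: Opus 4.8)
The plan is to extract everything from the single new hypothesis $d\Phi=0$, using the consequences of normality already in hand. Recall that Proposition~\ref{thm6.1} gives $N^{(3)}_i=\pounds_{\xi_i}{f}=0$ and $N^{(4)}_{ij}=\pounds_{\xi_i}\eta^j=0$, while its proof also establishes $d\eta^j(\xi_i,\cdot)=0$ and the relation $\nabla_{\xi_i}\xi_j+\nabla_{\xi_j}\xi_i=0$ of \eqref{3.30}. First I would note that $\iota_{\xi_i}\Phi=0$, since $\Phi(\xi_i,X)=g(\xi_i,{f}X)=-g({f}\xi_i,X)=0$ by \eqref{E-Q2-g} and ${f}\xi_i=0$. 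Then Cartan's formula \eqref{3.Ld} applied to the $2$-form $\Phi$, together with $d\Phi=0$, yields $\pounds_{\xi_i}\Phi=\iota_{\xi_i}(d\Phi)+d(\iota_{\xi_i}\Phi)=0$. Substituting this and $\pounds_{\xi_i}{f}=0$ into \eqref{3.9A} gives $(\pounds_{\xi_i}g)(X,{f}Y)=0$ for all $X,Y\in\mathfrak{X}_M$; since ${f}$ has rank $2n$, the vectors ${f}Y$ exhaust $f(TM)$, so
\[
 (\pounds_{\xi_i}g)(X,Z)=0,\qquad X\in\mathfrak{X}_M,\ Z\in f(TM).
\]

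To reach the complementary directions I would use \eqref{3.7} with $Z=\xi_i$ and second argument $\xi_j$: since $g(X,\xi_j)=\eta^j(X)$ and $g([\xi_i,X],\xi_j)=\eta^j([\xi_i,X])$, it follows that $(\pounds_{\xi_i}g)(X,\xi_j)=(\pounds_{\xi_i}\eta^j)(X)-g(X,[\xi_i,\xi_j])$, which by $\pounds_{\xi_i}\eta^j=0$ reduces to $(\pounds_{\xi_i}g)(X,\xi_j)=-g(X,[\xi_i,\xi_j])$. Taking $X\in f(TM)$ and using the displayed identity together with the symmetry of $\pounds_{\xi_i}g$ shows that the $f(TM)$-component of $[\xi_i,\xi_j]$ vanishes (nondegeneracy of $g$ on $f(TM)$), i.e.\ $[\xi_i,\xi_j]\in\ker f$. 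On the other hand, \eqref{3.3A} with $\eta^j(\xi_k)=\delta^j_k$ gives $d\eta^j(\xi_i,\xi_k)=-\tfrac12\,\eta^j([\xi_i,\xi_k])$, so $d\eta^j(\xi_i,\cdot)=0$ forces $\eta^k([\xi_i,\xi_j])=0$ for all $j,k$; hence $[\xi_i,\xi_j]=\sum_k\eta^k([\xi_i,\xi_j])\,\xi_k=0$. Consequently $(\pounds_{\xi_i}g)(X,\xi_j)=0$ for all $X$, and combined with the displayed identity and $TM=f(TM)\oplus\ker f$ this proves $\pounds_{\xi_i}g=0$: each $\xi_i$ is Killing.

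It then remains to record the consequences. Since $\nabla$ is torsion-free, $\nabla_{\xi_i}\xi_j-\nabla_{\xi_j}\xi_i=[\xi_i,\xi_j]=0$, and adding \eqref{3.30} yields \eqref{6.1e}, $\nabla_{\xi_i}\xi_j=0$. As $[\xi_i,\xi_j]=0\in\ker f$, the distribution $\ker f$ is involutive, hence defines a foliation, which is totally geodesic by \eqref{3.30}; on a leaf $L$ the restricted frame $\{\xi_i\}$ is $\nabla$-parallel, so the intrinsic Levi-Civita connection satisfies $\nabla^L_{\xi_i}\xi_j=(\nabla_{\xi_i}\xi_j)^\top=0$ and $L$ is flat; finally the foliation is Riemannian because its leaves are invariant under the isometric flows of the commuting Killing fields $\xi_i$, by the criterion recalled before the theorem. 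I expect the main obstacle to be precisely this split: $d\Phi=0$ alone produces only the $f(TM)$-part of $\pounds_{\xi_i}g$ through the fundamental form, so the $\ker f$-part, and with it the vanishing of the brackets $[\xi_i,\xi_j]$, must be squeezed out of the normality identities $\pounds_{\xi_i}\eta^j=0$ and $d\eta^j(\xi_i,\cdot)=0$ used jointly with the symmetry of $\pounds_{\xi_i}g$.
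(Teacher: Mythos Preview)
Your proof is correct and follows essentially the same approach as the paper: both use Proposition~\ref{thm6.1} for $N^{(3)}_i=N^{(4)}_{ij}=0$ and \eqref{3.30}, then Cartan's identity \eqref{3.Ld} together with $\iota_{\xi_i}\Phi=0$ and $d\Phi=0$ to get $\pounds_{\xi_i}\Phi=0$, and finally \eqref{3.9A} to obtain $(\pounds_{\xi_i}g)(X,fY)=0$. The only tactical difference is the order in which the $\ker f$--part is handled: you first prove $[\xi_i,\xi_j]=0$ (combining $d\eta^j(\xi_i,\cdot)=0$ for the $\ker f$--component with the symmetry of $\pounds_{\xi_i}g$ for the $f(TM)$--component) and deduce $(\pounds_{\xi_i}g)(\cdot,\xi_j)=0$, whereas the paper first uses \eqref{3.30} directly to get $(\pounds_{\xi_i}g)(\xi_k,\xi_j)=0$, concludes the Killing property, and only then extracts $g([\xi_i,\xi_j],fX)=0$ from $d\Phi(X,\xi_i,\xi_j)=0$ via \eqref{3.3}. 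Both routes are short and equivalent.
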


\begin{proof}
By Proposition~\ref{thm6.1}, the distribution $\ker f$ is totally geodesic, see \eqref{3.30}, and $N^{\,(3)}_i=\pounds_{\xi_i}{f}=0$.
Using $\iota_{{\xi_i}}\Phi=0$ and condition $d\Phi=0$ in the identity \eqref{3.Ld},
we get $\pounds_{\xi_i}\Phi=0$. Thus, from \eqref{3.9A} we obtain $(\pounds_{\xi_i}\,g)(X, {f}Y)=0$.
To show $\pounds_{\xi_i}\,g=0$, we will examine $(\pounds_{\xi_i}\,g)(fX, \xi_j)$ and $(\pounds_{\xi_i}\,g)(\xi_k, \xi_j)$.
Using $\pounds_{\xi_i}\,\eta^j =0$,
we get
\[
 (\pounds_{\xi_i}\,g)(fX, \xi_j)=(\pounds_{\xi_i}\,\eta^j)fX -g(fX, [\xi_i,\xi_j])=-g(fX, [\xi_i,\xi_j])=0.
\]
Using \eqref{3.30}, we get
 $(\pounds_{\xi_i}\,g)(\xi_k, \xi_j)= -g(\xi_i, \nabla_{\xi_k}\,\xi_j+\nabla_{\xi_j}\,\xi_k) = 0$.
Thus, $\xi_i$ is a Killing vector field, i.e., $\pounds_{\xi_i} g=0$.
By $d\Phi(X,\xi_i,\xi_j)=0$ and \eqref{3.3} we obtain $g([\xi_i,\xi_j], fX)=0$, i.e., $\ker f$ is integrable.
From this and \eqref{3.30} we get $\nabla_{\xi_k}\,\xi_j=0$; thus, the sectional curvature is $K(\xi_i,\xi_j)=0$.
\end{proof}

\begin{theorem}\label{thm6.2}
For a weak almost para-${\mathcal S}$-structure,
we get $N^{\,(2)}_i=N^{\,(4)}_{ij}=0$ and
\begin{equation}\label{E-N1}
 (N^{\,(1)}(X,Y))^\bot = 2\,g(X, f\widetilde{Q} Y)\,\bar\xi \,;
\end{equation}
moreover, $N^{\,(3)}_i$ vanishes if and only if $\,\xi_i$ is a Killing vector field.
\end{theorem}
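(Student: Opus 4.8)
The plan is to run everything through the defining relation $d\eta^i=\Phi$ of a weak almost para-$\mathcal S$-structure, together with the basic identities $f\xi_i=0$, $\eta^i\circ f=0$, $\eta^i\circ Q=\eta^i$, $[Q,f]=0$, the equality $g(X,\xi_i)=\eta^i(X)$, and the skew-symmetry of $f$, see~\eqref{E-Q2-g}. The vanishing of $N^{\,(2)}_i$ and $N^{\,(4)}_{ij}$ is immediate: by \eqref{2.7X}, $N^{\,(2)}_i(X,Y)=2\,d\eta^i(fX,Y)-2\,d\eta^i(fY,X)=2\,\Phi(fX,Y)-2\,\Phi(fY,X)=2\,g(fX,fY)-2\,g(fY,fX)=0$ since $g$ is symmetric; and by \eqref{2.9X}, $N^{\,(4)}_{ij}(X)=2\,d\eta^j(\xi_i,X)=2\,\Phi(\xi_i,X)=2\,g(\xi_i,fX)=0$ since $f\xi_i=0$.

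For \eqref{E-N1} I would apply $\eta^k$ to $N^{\,(1)}(X,Y)=[f,f](X,Y)-2\sum_j d\eta^j(X,Y)\,\xi_j$, see~\eqref{2.6X}. In the Nijenhuis tensor \eqref{2.5}, the terms $f[fX,Y]$ and $f[X,fY]$ are annihilated by $\eta^k\circ f=0$, while $\eta^k(f^2[X,Y])=0$ because $f^2=Q-\sum_i\eta^i\otimes\xi_i$ and $\eta^k\circ Q=\eta^k$; hence $\eta^k(N^{\,(1)}(X,Y))=\eta^k([fX,fY])-2\,d\eta^k(X,Y)$. Now \eqref{3.3A} with $\eta^k\circ f=0$ gives $\eta^k([fX,fY])=-2\,d\eta^k(fX,fY)=-2\,\Phi(fX,fY)=-2\,g(fX,f^2Y)=-2\,g(fX,QY)$, the last step using $g(fX,\xi_i)=\eta^i(fX)=0$. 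Since $d\eta^k(X,Y)=\Phi(X,Y)=g(X,fY)$ and $f$ is skew-symmetric, $\eta^k(N^{\,(1)}(X,Y))=2\,g(X,fQY)-2\,g(X,fY)=2\,g(X,f(Q-{\rm id}_{TM})Y)=2\,g(X,f\widetilde QY)$; summing against $\xi_k$ gives $(N^{\,(1)}(X,Y))^\bot=2\,g(X,f\widetilde QY)\,\bar\xi$, where $\bar\xi=\sum_k\xi_k$.

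For the equivalence I would first note that $\pounds_{\xi_i}\Phi=0$: by Cartan's formula \eqref{3.Ld}, $\pounds_{\xi_i}\Phi=\iota_{\xi_i}\,d\Phi+d\,\iota_{\xi_i}\Phi$, and $\Phi=d\eta^i$ forces $d\Phi=0$, while $(\iota_{\xi_i}\Phi)(X)=\Phi(\xi_i,X)=g(\xi_i,fX)=0$. Then \eqref{3.9A} reduces to $g\big(X,N^{\,(3)}_i(Y)\big)=g\big(X,(\pounds_{\xi_i}f)Y\big)=-(\pounds_{\xi_i}g)(X,fY)$ for all $X,Y$. If $\xi_i$ is Killing, the right-hand side vanishes, so $N^{\,(3)}_i=0$. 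Conversely, if $N^{\,(3)}_i=0$, then $(\pounds_{\xi_i}g)(X,fY)=0$ for all $X,Y$; since $f$ maps $TM$ onto $f(TM)$, this means $\pounds_{\xi_i}g$ vanishes as soon as one of its arguments lies in $f(TM)$. It remains to evaluate $(\pounds_{\xi_i}g)(\xi_k,\xi_j)$, which, since $g(\xi_k,\xi_j)$ is constant, equals $-g([\xi_i,\xi_k],\xi_j)-g(\xi_k,[\xi_i,\xi_j])=-\eta^j([\xi_i,\xi_k])-\eta^k([\xi_i,\xi_j])$, and both terms vanish because $N^{\,(4)}_{ij}=0$ (equivalently, $\pounds_{\xi_i}\eta^j=0$). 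Splitting $TM=f(TM)\oplus\ker f$ then yields $\pounds_{\xi_i}g=0$, i.e.\ $\xi_i$ is Killing.

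I do not expect a deep obstacle — the computation is mostly bookkeeping — but two points need care: tracking signs in the reduction of $\eta^k([fX,fY])$ so that $fQ-f$ (and not $f-fQ$) appears, which produces $+2\,g(X,f\widetilde QY)$; and observing that $\Phi=d\eta^i$ is exact, which is precisely what makes $\pounds_{\xi_i}\Phi=0$ and, via \eqref{3.9A}, ties $N^{\,(3)}_i$ to $\pounds_{\xi_i}g$ — without exactness one would only control $\pounds_{\xi_i}g$ on $f(TM)$ through a weaker argument.
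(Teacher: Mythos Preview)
Your proof is correct and follows essentially the same route as the paper: the vanishing of $N^{\,(2)}_i$ and $N^{\,(4)}_{ij}$ comes straight from $d\eta^i=\Phi$ together with $f\xi_i=0$ and the skew-symmetry of $f$; formula~\eqref{E-N1} is obtained by pairing $N^{\,(1)}$ with $\xi_k$, reducing $[f,f]$ to its $[fX,fY]$ term via $\eta^k\circ f=\eta^k\circ f^2=0$, and then recognising $f^3-f=f\widetilde Q$; and the equivalence with the Killing condition uses $\pounds_{\xi_i}\Phi=0$ (equivalently $\pounds_{\xi_i}d\eta^j=0$) together with~\eqref{3.9A}. Your treatment of the converse direction---checking $(\pounds_{\xi_i}g)(\xi_k,\xi_j)=0$ via $N^{\,(4)}_{ij}=0$---is spelled out more explicitly than in the paper, which handles this step by reference to the proof of Theorem~\ref{C-K}.
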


\begin{proof} Applying \eqref{2.3} in \eqref{2.7X} and using skew-symmetry of ${f}$ we get $N^{\,(2)}_i=0$.
Equation \eqref{2.3} with $Y=\xi_i$ yields $d\eta^j(X,\xi_i)=g(X,{f}\,\xi_i)=0$ for any $X\in\mathfrak{X}_M$; thus, we get \eqref{3.11A},
i.e., $N^{\,(4)}_{ij}=0$.
Using \eqref{2.3} and
\[
 g([f,f](X,Y), \xi_i) = g([fX,fY], \xi_i) = -2\,d\eta^i(fX,fY) = -2\,\Phi(fX, fY)
\]
for all $i$, we also calculate
\begin{eqnarray*}
 & \frac12\,g(N^{\,(1)}(X,Y), \xi_i) = -d\eta^i(fX,fY) - g(\sum\nolimits_{j}d\eta^j(X,Y)\,\xi_j, \xi_i) \\
 &= -\Phi(fX, fY) -\Phi(X, Y) = g(X, (f^3-f)Y) = g(X, \widetilde{Q} f Y),
\end{eqnarray*}
that proves \eqref{E-N1}.
Next, invoking \eqref{2.3} in the equality
\begin{align*}
 (\pounds_{\xi_i}\,d\eta^j)(X,Y) = \xi_i(d\eta^j(X,Y)) - d\eta^j([\xi_i,X], Y) - d\eta^j(X,[\xi_i,Y]),
\end{align*}
and using \eqref{3.7}, we obtain for all $i,j$
\begin{align}\label{3.9}
 (\pounds_{\xi_i}\,d\eta^j)(X,Y) = (\pounds_{\xi_i}\,g)(X, {f}Y) + g(X,(\pounds_{\xi_i}{f})Y).
\end{align}
Since $\pounds_V=\iota_{V}\circ d+d\circ\iota_{V}$, the exterior derivative $d$ commutes with the Lie-derivative, i.e., $d\circ\pounds_V = \pounds_V\circ d$, and as in the proof of Theorem~\ref{C-K}, we get that $d\eta^i$ is invariant under the action of $\xi_i$, i.e., $\pounds_{\xi_i}\,d\eta^j=0$.
Therefore, \eqref{3.9} implies that $\xi_i$ is a Killing vector field if and only if $N^{\,(3)}_i=0$.
\end{proof}

\begin{theorem}\label{thm6.2C}
For a weak almost para-${\mathcal C}$-structure, we get $N^{\,(2)}_i=N^{\,(4)}_{ij}=0$, $N^{\,(1)}=[{f},{f}]$, and
\eqref{6.1e};
thus, the distribution $\ker f$ is tangent to a totally geodesic foliation with the sectional curvature $K(\xi_i,\xi_j)=0$.
Moreover, $N^{\,(3)}_i=0$ if and only if $\,\xi_i$ is a Killing vector~field.
\end{theorem}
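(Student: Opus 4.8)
The plan is to transcribe the proofs of Theorems~\ref{C-K} and~\ref{thm6.2}, now exploiting that \emph{both} $d\eta^i=0$ and $d\Phi=0$ hold. The first three assertions are immediate: substituting $d\eta^i=0$ into \eqref{2.7X}, \eqref{2.9X} and \eqref{2.6X} annihilates every term containing a $d\eta^i$, so $N^{\,(2)}_i=N^{\,(4)}_{ij}=0$ and $N^{\,(1)}=[f,f]$. The content therefore lies in the statements about $\ker f$ and in the Killing criterion.

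First I would record $\pounds_{\xi_i}\eta^j=0$: the Cartan identity \eqref{3.Ld} gives $\pounds_{\xi_i}\eta^j=d(\eta^j(\xi_i))+\iota_{\xi_i}d\eta^j=0$, since $\eta^j(\xi_i)=\delta^j_i$ is constant and $d\eta^j=0$. Writing $(\pounds_{\xi_i}\eta^j)X=g(X,\nabla_{\xi_i}\xi_j)+g(\nabla_X\xi_i,\xi_j)$ as in the proof of Proposition~\ref{thm6.1}, symmetrizing in $i,j$ and using $g(\xi_i,\xi_j)=\delta_{ij}$, I obtain \eqref{3.30}, that is $\nabla_{\xi_i}\xi_j+\nabla_{\xi_j}\xi_i=0$; hence $\ker f$ is totally geodesic.

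Next I would upgrade \eqref{3.30} to \eqref{6.1e}. On one hand, $d\eta^i=0$ and \eqref{3.3A} give $\eta^i([\xi_j,\xi_k])=0$ for all $i$, i.e. $[\xi_j,\xi_k]\in f(TM)$. On the other hand, substituting $X,\xi_i,\xi_j$ into the cocycle formula \eqref{3.3} and using $f\xi_i=0$ and $\Phi(\xi_i,\cdot)=0$ exactly as in the proof of Theorem~\ref{C-K}, the hypothesis $d\Phi(X,\xi_i,\xi_j)=0$ forces $g([\xi_i,\xi_j],fX)=0$ for all $X$, i.e. $[\xi_i,\xi_j]\perp f(TM)$. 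Since $TM=f(TM)\oplus\ker f$ orthogonally, this yields $[\xi_i,\xi_j]=0$; in particular $\ker f$ is integrable, and combining $[\xi_i,\xi_j]=0$ with \eqref{3.30} gives $\nabla_{\xi_i}\xi_j=0$. Then $R(\xi_i,\xi_j)\xi_k=0$, so the leaves are flat and $K(\xi_i,\xi_j)=0$.

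Finally, for ``$N^{\,(3)}_i=0\iff\xi_i$ is Killing'' I would use \eqref{3.9A}. Since $\iota_{\xi_i}\Phi=0$ (because $\Phi(\xi_i,Y)=g(\xi_i,fY)=0$) and $d\Phi=0$, the Cartan identity gives $\pounds_{\xi_i}\Phi=0$, so \eqref{3.9A} reduces to $(\pounds_{\xi_i}g)(X,fY)=-g(X,N^{\,(3)}_i Y)$. If $\xi_i$ is Killing this at once gives $N^{\,(3)}_i=0$. Conversely, if $N^{\,(3)}_i=0$ then $(\pounds_{\xi_i}g)(X,fY)=0$; to deduce $\pounds_{\xi_i}g=0$ I would further check, as in the proof of Theorem~\ref{C-K}, that $(\pounds_{\xi_i}g)(fX,\xi_j)=(\pounds_{\xi_i}\eta^j)(fX)-g(fX,[\xi_i,\xi_j])=0$ and $(\pounds_{\xi_i}g)(\xi_k,\xi_j)=-g(\xi_i,\nabla_{\xi_k}\xi_j+\nabla_{\xi_j}\xi_k)=0$, using $\pounds_{\xi_i}\eta^j=0$, $[\xi_i,\xi_j]=0$ and \eqref{3.30}; since every vector field is a combination of fields $fY$ and $\xi_j$, these give $\pounds_{\xi_i}g=0$. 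The only genuinely new point compared with Theorems~\ref{C-K} and~\ref{thm6.2} is the joint use of $d\eta^i=0$ and $d\Phi=0$ to pin down $[\xi_i,\xi_j]=0$; otherwise the argument is a routine transcription, so I expect no serious obstacle.
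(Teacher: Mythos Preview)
Your proposal is correct and follows essentially the same route as the paper: both arguments read off $N^{\,(2)}_i=N^{\,(4)}_{ij}=0$ and $N^{\,(1)}=[f,f]$ from $d\eta^i=0$, use $d\Phi(X,\xi_i,\xi_j)=0$ together with $d\eta^k(\xi_i,\xi_j)=0$ to get $[\xi_i,\xi_j]=0$, combine this with \eqref{3.30} to obtain \eqref{6.1e}, and deduce the Killing criterion from the identity \eqref{3.9A} with $\pounds_{\xi_i}\Phi=0$. If anything, your version is a bit more careful: the paper simply cites \eqref{3.9} for the equivalence ``$N^{\,(3)}_i=0\Leftrightarrow\xi_i$ Killing'', whereas you correctly note that \eqref{3.9A} only controls $(\pounds_{\xi_i}g)(X,fY)$ and explicitly check the remaining slots $(\pounds_{\xi_i}g)(fX,\xi_j)$ and $(\pounds_{\xi_i}g)(\xi_k,\xi_j)$, exactly as was done in Theorem~\ref{C-K}.
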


\begin{proof}
By \eqref{2.7X} and \eqref{2.9X} and since $d\eta^i=0$, the tensors $N^{\,(2)}_i$ and $N^{\,(4)}_{ij}$ vanish on a weak almost para-${\mathcal C}$-structure.
Moreover, by \eqref{2.6X} and \eqref{3.9}, respectively, the tensor $N^{\,(1)}$ coincides with $[f,f]$,
and $N^{\,(3)}_i=\pounds_{\xi_i}{f}\ (1\le i\le p)$ vanish if and only if each $\xi_i$ is a Killing~vector.
From the equalities
\begin{align*}
 3\,d\Phi(X,\xi_i,\xi_j)  = g([\xi_i,\xi_j], fX), \qquad
 2\,d\eta^k(\xi_j, \xi_i) = g([\xi_i,\xi_j],\xi_k)
\end{align*}
and conditions $d\Phi=0$ and $d\eta^i=0$ we obtain
\begin{align}\label{6.1d}
 [\xi_i, \xi_j] & = 0,\quad 1\le i,j\le p .
\end{align}
Next, from $d\eta^i=0$ and the equality
\[
 2\,d\eta^i(\xi_j,X)+2\,d\eta^j(\xi_i,X) = g(\nabla_{\xi_i}\,\xi_j+\nabla_{\xi_j}\,\xi_i, X)
\]
we obtain \eqref{3.30}: $\nabla_{\xi_i}\,\xi_j+\nabla_{\xi_j}\,\xi_i=0$. From this and \eqref{6.1d} we get \eqref{6.1e}.
\end{proof}

We will express $\nabla_{X}{f}$ using a new tensor on a metric weak para-$f$-structure.
The following assertion
generalizes \cite[Proposition~1]{FP-2017}.

\begin{proposition}\label{lem6.1}
For a metric weak para-$f$-structure
we get
\begin{eqnarray}\label{3.1}
 & 2\,g((\nabla_{X}{f})Y,Z) = -3\,d\Phi(X,{f} Y,{f} Z) - 3\, d\Phi(X,Y,Z) - g(N^{\,(1)}(Y,Z),{f} X)\notag\\
 & +\sum\nolimits_{i}\big( N^{\,(2)}_i(Y,Z)\,\eta^i(X) + 2\,d\eta^i({f} Y,X)\,\eta^i(Z) - 2\,d\eta^i({f} Z,X)\,\eta^i(Y)\big)\notag\\
 & + N^{\,(5)}(X,Y,Z),
\end{eqnarray}
where a skew-symmetric w.r.t. $Y$ and $Z$ tensor $N^{\,(5)}(X,Y,Z)$ is defined by
\begin{eqnarray*}
  N^{\,(5)}(X,Y,Z) &=& ({f} Z)\,(g(X, \widetilde{Q}Y)) -({f} Y)\,(g(X, \widetilde{Q}Z)) +g([X, {f} Z], \widetilde{Q}Y)\\
 &&-\,g([X,{f} Y], \widetilde{Q}Z) + g([Y,{f} Z] -[Z, {f} Y] - {f}[Y,Z],\ \widetilde{Q} X).
\end{eqnarray*}
\end{proposition}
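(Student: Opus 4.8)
The plan is to follow the classical derivation of the covariant derivative of an almost para-$f$-structure (as in the Sasakian/Kähler case), keeping careful track of the extra terms produced by $\widetilde Q = Q - \mathrm{id}_{TM}$. The starting point is the standard Koszul-type identity expressing $2\,g((\nabla_X f)Y,Z)$. Concretely, from $(\nabla_X\Phi)(Y,Z) = g((\nabla_X f)Y,Z)$ and the Levi-Civita property, I would write
\[
 2\,g((\nabla_X f)Y,Z) = 2\,(\nabla_X\Phi)(Y,Z)
\]
and then expand both $3\,d\Phi(X,Y,Z)$ and $3\,d\Phi(X,fY,fZ)$ via the co-boundary formula \eqref{3.3}, turning all brackets into covariant derivatives using $[\cdot,\cdot]=\nabla_\cdot\,\cdot - \nabla_\cdot\,\cdot$ and the skew-symmetry of $f$ from \eqref{E-Q2-g}. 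The combination $-3\,d\Phi(X,fY,fZ) - 3\,d\Phi(X,Y,Z)$ is chosen exactly so that the ``unwanted'' $\nabla\Phi$ terms telescope, just as in the classical case; what survives beyond $2\,g((\nabla_X f)Y,Z)$ are terms of the shape $g(\nabla(\cdot)f\cdot,\cdot)$ that get reassembled into the Nijenhuis tensor via \eqref{4.NN}, plus terms involving $\eta^i$ and $d\eta^i$, plus — and this is the new feature — terms where $f^2$ appears and must be replaced using \eqref{2.1} by $Q - \sum_i\eta^i\otimes\xi_i = \mathrm{id}_{TM} + \widetilde Q - \sum_i\eta^i\otimes\xi_i$.

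The key bookkeeping step is therefore: every place in the classical computation where $f^2$ or $g(fX,fY)$-type expressions occur, substitute \eqref{2.1}. The ``$\mathrm{id}_{TM}$'' part reproduces the classical identity of \cite[Proposition~1]{FP-2017}; the ``$-\sum_i\eta^i\otimes\xi_i$'' part feeds into the $\eta^i$, $d\eta^i$ and $N^{(2)}_i$ terms already present in \eqref{3.1} (using \eqref{2.7X} and the fact that $\widetilde Q\,\xi_i=0$, so $\eta^i\circ\widetilde Q = 0$ by the Proposition, to see that the $\xi_i$-parts of the $\widetilde Q$-contributions collapse); and the ``$\widetilde Q$'' part is precisely what must be collected into $N^{(5)}$. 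So I would define $N^{(5)}(X,Y,Z)$ to be exactly the residual aggregate of $\widetilde Q$-terms, and then the content of the proposition is the two assertions: (i) this residue equals the displayed closed formula, and (ii) it is skew-symmetric in $Y,Z$. For (ii), skew-symmetry is visible by inspection of the displayed formula — the first four terms come in antisymmetrized pairs $({f}Z)(g(X,\widetilde Q Y)) - ({f}Y)(g(X,\widetilde Q Z))$ and $g([X,{f}Z],\widetilde Q Y) - g([X,{f}Y],\widetilde Q Z)$, while the last line $g([Y,{f}Z]-[Z,{f}Y]-{f}[Y,Z],\widetilde Q X)$ is manifestly odd under $Y\leftrightarrow Z$.

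The main obstacle I anticipate is purely organizational rather than conceptual: correctly separating the expansion of $d\Phi(X,fY,fZ)$ into a $Q$-part and an $\eta$-part without dropping or double-counting terms, because $\Phi(fY,fZ) = g(fY, f^2 Z) = g(fY, QZ) - \sum_i\eta^i(Z)g(fY,\xi_i) = -g(Q fY, Z)$ (the $\xi_i$-terms vanishing since $fY\perp\ker f$), so the factors of $Q$ proliferate and each must be tracked through the derivative terms $X\,\Phi(fY,fZ)$ etc. I would manage this by first writing out the classical identity in full as a template, then for each of its terms marking where \eqref{2.1} was used, and only at the end grouping the $\widetilde Q$-remainder, verifying against the stated $N^{(5)}$ term by term. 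A useful consistency check is that setting $Q = \mathrm{id}_{TM}$ kills $\widetilde Q$ hence $N^{(5)}\equiv 0$, recovering \cite[Proposition~1]{FP-2017} exactly, and also $N^{(5)}(\xi_i,Y,Z)$ should reduce to something expressible via $N^{(2)}_i$ and the $\widetilde Q$-bracket terms, consistent with \eqref{3.1KK} in the normal case.
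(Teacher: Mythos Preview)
Your plan is correct and is essentially the paper's approach. The only cosmetic difference is direction: the paper starts from the left-hand side, expands $2\,g((\nabla_X f)Y,Z)=2\,g(\nabla_X(fY),Z)+2\,g(\nabla_X Y, fZ)$ by Koszul, and then substitutes the identity $g(X,Z)=-\Phi(fX,Z)+\sum_i\eta^i(X)\eta^i(Z)-g(X,\widetilde Q Z)$ (derived from \eqref{2.2}) six times before regrouping into the $d\Phi$, $N^{(1)}$, $N^{(2)}_i$, $d\eta^i$ pieces and the $\widetilde Q$-residue $N^{(5)}$; you instead expand the $d\Phi$ terms on the right-hand side and use \eqref{2.1} for the $f^2$-substitutions, but the bookkeeping and the identification of $N^{(5)}$ are identical. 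One small correction: with the paper's convention $\Phi(Y,Z)=g(Y,fZ)$ and $f$ skew-symmetric one has $(\nabla_X\Phi)(Y,Z)=g(Y,(\nabla_X f)Z)=-g((\nabla_X f)Y,Z)$, so watch the sign when you equate $2\,g((\nabla_X f)Y,Z)$ with $\nabla_X\Phi$.
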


\begin{proof}
Using
the skew-symmetry of ${f}$, one can compute
\begin{eqnarray}\label{3.4}
 & 2\,g((\nabla_{X}{f})Y,Z) = 2\,g(\nabla_{X}({f} Y),Z) + 2\,g( \nabla_{X}Y,{f} Z) \notag\\
 & = X\,g({f} Y,Z) + ({f} Y)\,g(X,Z) - Z\,g(X,{f} Y) \notag\\
 & +\, g([X,{f} Y],Z) +g([Z,X],{f} Y) - g([{f} Y,Z],X) \notag\\
 & +\, X\,g(Y,{f} Z) + Y\,g(X,{f} Z) - ({f} Z)\,g(X,Y)\notag\\
 & +\, g([X,Y],{f} Z) + g([{f} Z,X],Y) - g([Y,{f} Z],X).
\end{eqnarray}
Using \eqref{2.2}, we obtain
\begin{align}\label{XZ}
\notag
 g(X,Z) &= -\Phi({f} X, Z) -g(X,\widetilde{Q} Z) +\sum\nolimits_{i}\big(\eta^i(X)\,\eta^i(Z) +\eta^i(X)\,\eta^i(\widetilde{Q} Z)\big)\\
 &= -\Phi({f} X, Z) + \sum\nolimits_{i}\eta^i(X)\,\eta^i(Z)  - g(X, \widetilde{Q}Z).
\end{align}
Thus, and in view of the skew-symmetry of ${f}$ and applying \eqref{XZ} six times, \eqref{3.4} can be written~as
\begin{align*}
& 2\,g((\nabla_{X}{f})Y,Z) = X\,\Phi(Y, Z)
+({f} Y)\,\big(-\Phi({f} X, {Z})+\sum\nolimits_{i}\eta^i(X)\,\eta^i(Z) \big) \\
& - ({f} Y)\,g(X,\widetilde{Q}Z) - Z\,\Phi(X,Y) \\
& +\Phi([X,{f} Y],{f} {Z}) + \sum\nolimits_{i}\eta^i([X,{f} Y])\eta^i(Z) - g([X,{f} Y],\widetilde{Q}Z) +\Phi([Z,X],Y) \notag\\
& -\Phi([{f} Y,Z],{f} {X}) - \sum\nolimits_{i}\eta^i([{f} Y,Z])\,\eta^i(X) + g([{f} Y, Z], \widetilde{Q}X) + X\,\Phi(Y,Z) \\
& +Y\,\Phi(X,Z) - ({f} Z)\,\big(-\Phi({f} X, {Y}) + \sum\nolimits_{i}\eta^i(X)\,\eta^i(Y)\big) + ({f} Z) g(X, \widetilde{Q}Y) \\
& +\Phi([X,Y],Z) + g({f}[-{f} Z,X],{f} {Y}) + \sum\nolimits_{i}\eta^i([{f} Z,X])\eta^i(Y) - g([{f} Z,X],\widetilde{Q}Y)\\
& +g({f}[Y,{f} Z],{f} {X}) - \sum\nolimits_{i}\eta^i([Y,{f} Z])\,\eta^i(X) + g([Y,{f} Z], \widetilde{Q}X) .
\end{align*}
We also have
\begin{eqnarray*}
  g(N^{\,(1)}(Y,Z),{f} X) = g({f}^2 [Y,Z] + [{f} Y, {f} Z] - {f}[{f} Y,Z] - {f}[Y,{f} Z], {f} X)\\
  = - g({f}[Y,Z], \widetilde{Q} X) + g([{f} Y, {f} Z] - {f}[{f} Y,Z] - {f}[Y,{f} Z] - [Y,Z], {f} X).
\end{eqnarray*}
From this and \eqref{3.3} we get the required result.
\end{proof}

\begin{remark}\rm
For particular values of the tensor $N^{\,(5)}$ we get
\begin{eqnarray}\label{KK}
\nonumber
 N^{\,(5)}(X,\xi_i,Z) & = & - N^{\,(5)}(X, Z, \xi_i) = g( N^{\,(3)}_i(Z),\, \widetilde{Q} X),\\
\nonumber
 N^{\,(5)}(\xi_i,Y,Z) &=& g([\xi_i, {f} Z], \widetilde{Q}Y) -g([\xi_i,{f} Y], \widetilde{Q}Z),\\
 N^{\,(5)}(\xi_i,Y,\xi_j) &=& N^{\,(5)}(\xi_i,\xi_j, Y) =0.
\end{eqnarray}
\end{remark}

We will discuss the meaning of $\nabla_{X}{f}$ for weak almost para-${\mathcal S}$- and weak para-${\mathcal K}$- structures.
The following corollary of Proposition~\ref{lem6.1} and Theorem~\ref{thm6.2}
generalizes well-known results with $Q={\rm id}_{TM}$.

\begin{corollary}\label{cor3.1}
For a weak almost para-${\mathcal S}$-structure we get
\begin{align}\label{3.1A}
\nonumber
 2\,g((\nabla_{X}{f})Y,Z) & = - g(N^{\,(1)}(Y,Z),{f} X) +2\,g(fX,fY)\,\bar\eta(Z) \\
 & -2\,g(fX,fZ)\,\bar\eta(Y) + N^{\,(5)}(X,Y,Z),
\end{align}
where $\bar\eta=\sum\nolimits_{i}\eta^i$.
In particular, taking $x=\xi_i$ and then $Y=\xi_j$ in \eqref{3.1A}, we get
\begin{align}\label{3.1AA}
 2\,g((\nabla_{\xi_i}{f})Y,Z) &= N^{\,(5)}(\xi_i,Y,Z) ,\quad 1\le i\le p,
\end{align}
and \eqref{6.1e}; thus, the characteristic distribution
is tangent to a totally geodesic foliation with flat leaves.
\end{corollary}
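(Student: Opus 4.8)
The plan is to specialize the master formula \eqref{3.1} of Proposition~\ref{lem6.1} to a weak almost para-$\mathcal S$-structure. First I would recall what Theorem~\ref{thm6.2} gives us in this setting: $N^{(2)}_i = N^{(4)}_{ij} = 0$, and, crucially, $2\,d\eta^i(X,Y) = 2\,\Phi(X,Y) = 2\,g(X,fY)$, so all the $d\eta^i$-terms in \eqref{3.1} can be rewritten using $\Phi$ and then using \eqref{2.2} converted into expressions in $g(fX,fY)$. Concretely, the terms $2\,d\eta^i(fY,X)\,\eta^i(Z)$ become $2\,g(fY,fX)\,\eta^i(Z)$ and $2\,d\eta^i(fZ,X)\,\eta^i(Y)$ become $2\,g(fZ,fX)\,\eta^i(Y)$; summing over $i$ produces $2\,g(fX,fY)\,\bar\eta(Z) - 2\,g(fX,fZ)\,\bar\eta(Y)$, matching the claimed middle terms of \eqref{3.1A}. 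Meanwhile $N^{(2)}_i \equiv 0$ kills the corresponding sum.

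The next point is that the two $d\Phi$-terms in \eqref{3.1} vanish. Here I would argue that for a weak almost para-$\mathcal S$-structure $\Phi = d\eta^i$ is exact, hence closed, so $d\Phi = 0$ identically; thus both $-3\,d\Phi(X,fY,fZ)$ and $-3\,d\Phi(X,Y,Z)$ drop out. What remains of \eqref{3.1} is exactly $2\,g((\nabla_X f)Y,Z) = -g(N^{(1)}(Y,Z),fX) + 2\,g(fX,fY)\,\bar\eta(Z) - 2\,g(fX,fZ)\,\bar\eta(Y) + N^{(5)}(X,Y,Z)$, which is \eqref{3.1A}. Then to get \eqref{3.1AA} I would substitute $X = \xi_i$: since $\eta^k(\xi_i) = \delta^k_i$ and $f\xi_i = 0$, the terms $g(f\xi_i,fY)\,\bar\eta(Z)$ and $g(f\xi_i,fZ)\,\bar\eta(Y)$ vanish, and $g(N^{(1)}(Y,Z),f\xi_i) = g(N^{(1)}(Y,Z),0) = 0$, leaving $2\,g((\nabla_{\xi_i}f)Y,Z) = N^{(5)}(\xi_i,Y,Z)$.

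For the final claim about $\ker f$ being a totally geodesic foliation with flat leaves, i.e.\ \eqref{6.1e}, I would set $Y = \xi_j$ in \eqref{3.1AA} and use the formula \eqref{KK} for $N^{(5)}(\xi_i,\xi_j,Z)$, which equals $0$; hence $g((\nabla_{\xi_i}f)\xi_j, Z) = 0$ for all $Z$. Expanding $(\nabla_{\xi_i}f)\xi_j = \nabla_{\xi_i}(f\xi_j) - f(\nabla_{\xi_i}\xi_j) = -f(\nabla_{\xi_i}\xi_j)$ shows $f(\nabla_{\xi_i}\xi_j) = 0$, i.e.\ $\nabla_{\xi_i}\xi_j \in \ker f$; combining this with the symmetric relation \eqref{3.30} from Proposition~\ref{thm6.1} (valid since a weak almost para-$\mathcal S$-structure satisfies \eqref{3.11A}, as noted in Theorem~\ref{thm6.2}), one would still need the antisymmetric part. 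To pin down $\nabla_{\xi_i}\xi_j$ completely I would also exploit integrability: from $\Phi = d\eta^i$ and $d\Phi = 0$ one gets $d\,d\eta^i = 0$ automatically, but the relevant identity is that $d\eta^k(\xi_i,\xi_j) = \Phi(\xi_i,\xi_j) = g(\xi_i, f\xi_j) = 0$, which via \eqref{3.3A} forces $\eta^k([\xi_i,\xi_j]) = 0$, and together with $\nabla_{\xi_i}\xi_j - \nabla_{\xi_j}\xi_i = [\xi_i,\xi_j]$ tangent to $\ker f$ and \eqref{3.30}, yields $\nabla_{\xi_i}\xi_j = 0$. Then $\ker f$ is integrable and totally geodesic, and $\nabla_{\xi_i}\xi_j = 0$ gives vanishing sectional curvature $K(\xi_i,\xi_j) = 0$ along the leaves. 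The main obstacle I anticipate is the careful bookkeeping in converting every $d\eta^i$-occurrence in \eqref{3.1} into $g(f\cdot,f\cdot)$ terms with the correct signs and getting the $\bar\eta$ aggregation right; the geometric conclusion is then a short consequence of \eqref{KK} and standard foliation facts.
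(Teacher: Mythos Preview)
Your proposal is correct and follows essentially the same route as the paper: specialize \eqref{3.1} using $d\eta^i=\Phi$ (hence $d\Phi=0$) and $N^{(2)}_i=0$ from Theorem~\ref{thm6.2}, then set $X=\xi_i$ to obtain \eqref{3.1AA}. The only minor divergence is in the endgame for \eqref{6.1e}: the paper argues $\nabla_{\xi_i}\xi_j\in\ker f$, then shows $[\xi_i,\xi_j]=0$ and differentiates $g(\xi_j,\xi_k)=\delta_{jk}$ to force $\nabla_{\xi_i}\xi_j\in f(TM)$, hence zero; you instead combine $[\xi_i,\xi_j]=0$ with \eqref{3.30}. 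Your justification that \eqref{3.30} is available here (since its derivation in the proof of Proposition~\ref{thm6.1} uses only \eqref{3.11A}, which holds by Theorem~\ref{thm6.2}) is correct, though it would be cleaner to re-derive \eqref{3.30} in one line rather than cite a proposition whose normality hypothesis is not assumed.
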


\begin{proof}
According to Theorem~\ref{thm6.2}, for a weak almost para-${\mathcal S}$-structure we have
 $d\eta^i = \Phi$ and $N^{\,(2)}_i= N^{\,(4)}_{ij}=0$.
Thus, invoking \eqref{2.3} and using Theorem~\ref{thm6.2} in \eqref{3.1}, we get \eqref{3.1A}.
From \eqref{3.1AA} with $Y=\xi_j$ we get $g(f\nabla_{\xi_i}\,\xi_j, Z)=0$,
thus $\nabla_{\xi_i}\,\xi_j\in\ker f$.~Also,
\[
 \eta^k([\xi_i,\xi_j])= -2\,d\eta^k(\xi_i,\xi_j) =-2\,g(\xi_i, f\xi_j)=0;
\]
hence, $[\xi_i,\xi_j]=0$, i.e., $\nabla_{\xi_i}\,\xi_j=\nabla_{\xi_j}\,\xi_i$.
Finally, from $g(\xi_j,\xi_k)=\delta_{jk}$, using the covariant derivative with respect to $\xi_i$ and the above equality,
we get $\nabla_{\xi_i}\,\xi_j\in f(TM)$. This together with $\nabla_{\xi_i}\,\xi_j\in\ker f$ proves \eqref{6.1e}.
\end{proof}

\section{The tensor field $h$}
\label{sec:3a}

Here, we apply for a weak almost para-${\mathcal S}$-manifold the tensor field $h=(h_1,\ldots,h_p)$, where
 $h_i=\frac{1}{2}\, N^{\,(3)}_i = \frac{1}{2}\,\pounds_{\xi_i}{f}$ .
By Theorem~\ref{thm6.2}, $h_i=0$ if and only if $\xi_i$ is a Killing field.
First, we calculate
\begin{align}\label{4.2}
\nonumber
 & (\pounds_{\xi_i}{f})X \overset{\eqref{3.3B}} = \nabla_{\xi_i}({f} X) - \nabla_{{f} X}\,\xi_i - {f}(\nabla_{\xi_i}X - \nabla_{X}\,\xi_i) \\
 &\ = (\nabla_{\xi_i}{f})X - \nabla_{{f} X}\,\xi_i + {f}\nabla_X\,\xi_i.
\end{align}
For $X=\xi_i$ in \eqref{4.2}, using
$g((\nabla_{\xi_i}{f})\,\xi_j,Z)=\frac12 N^{\,(5)}(\xi_i,\xi_j,Z)=0$, see \eqref{3.1AA},
and $\nabla_{\xi_i}\,\xi_j=0$, see Corollary~\ref{cor3.1}, we get
\begin{align}\label{4.2b}
 h_i\,\xi_j = 0.
\end{align}
The following result generalizes the fact that for an almost para-${\mathcal S}$-structure, each tensor $h_i$ is self-adjoint and commutes with ${f}$.

\begin{proposition}
For a weak almost para-${\mathcal S}$-structure, the tensor $h_i$ and its conjugate $h_i^*$ satisfy
\begin{eqnarray}\label{E-31}
 g((h_i-h_i^*)X, Y) &=& \frac{1}{2}\,N^{\,(5)}(\xi_i, X, Y),\\
 \label{E-30b}
 \nabla\,\xi_i &=&
 Q^{-1} {f}\, h^*_i - f , \\
 \label{E-31A}
 h_i{f}+{f}\, h_i &=& -\frac12\,\pounds_{\xi_i}\widetilde{Q}.
\end{eqnarray}
\end{proposition}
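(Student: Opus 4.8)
**Proof plan for the Proposition on $h_i$, $h_i^*$**

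The plan is to derive all three identities from the basic formula \eqref{4.2} for $(\pounds_{\xi_i}f)X = 2h_iX$ together with the skew-symmetry of $f$ and the self-adjointness of $Q$ from \eqref{E-Q2-g}, and the expression \eqref{3.1AA} for $g((\nabla_{\xi_i}f)Y,Z)$ in terms of $N^{(5)}$.

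First I would establish \eqref{E-31}. Starting from $2h_i X = (\nabla_{\xi_i}f)X - \nabla_{fX}\xi_i + f\nabla_X\xi_i$, I take the $g(\cdot,Y)$ inner product and also compute $2g(h_i^* X, Y) = 2g(X, h_i Y)$ by writing out $2h_i Y$ the same way. Subtracting the two expressions, the terms $-\nabla_{fX}\xi_i$, $f\nabla_X\xi_i$ and their $Y$-counterparts should combine: using skew-symmetry of $f$, $g(f\nabla_X\xi_i, Y) = -g(\nabla_X\xi_i, fY)$, and using $\xi_i$ Killing-type manipulations together with \eqref{3.7} and the fact (from Theorem~\ref{thm6.2}, Corollary~\ref{cor3.1}) that $\nabla_{\xi_i}\xi_j$-type terms are controlled, the non-$N^{(5)}$ contributions cancel, leaving $g((h_i - h_i^*)X,Y) = \tfrac12\big(g((\nabla_{\xi_i}f)X,Y) - g(X,(\nabla_{\xi_i}f)Y)\big)$. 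Then \eqref{3.1AA} gives $g((\nabla_{\xi_i}f)X,Y) = \tfrac12 N^{(5)}(\xi_i,X,Y)$, and since $N^{(5)}$ is skew-symmetric in its last two arguments, $g(X,(\nabla_{\xi_i}f)Y) = \tfrac12 N^{(5)}(\xi_i,Y,X) = -\tfrac12 N^{(5)}(\xi_i,X,Y)$, so the difference is $N^{(5)}(\xi_i,X,Y)/2$ as claimed.

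Next, for \eqref{E-30b}: I rewrite \eqref{4.2} as $f\nabla_X\xi_i = 2h_iX - (\nabla_{\xi_i}f)X + \nabla_{fX}\xi_i$. To isolate $\nabla_X\xi_i$ I apply $f$ once more and use \eqref{2.1}, i.e. $f^2 = Q - \sum_j \eta^j\otimes\xi_j$, noting $\nabla_X\xi_i \in f(TM)$ (which follows since $g(\nabla_X\xi_i,\xi_j) = \tfrac12 X(g(\xi_i,\xi_j))=0$ by \eqref{3.30}/orthonormality), hence $f^2\nabla_X\xi_i = Q\nabla_X\xi_i$. Taking the $g(\cdot,Z)$ product and converting everything to adjoints — using $g(h_i fX, Z) = g(X, f^* h_i^* Z) = -g(X, f h_i^* Z)$ and skew-symmetry to move $f$ around — the right-hand side should reorganize into $g(Q^{-1}f h_i^* X, Z) - g(fX,Z)$ after using $[Q,f]=0$ and self-adjointness of $Q$; the $(\nabla_{\xi_i}f)X$ term is absorbed because $f(\nabla_{\xi_i}f)X$ relates back to $N^{(5)}$-terms that vanish on this combination (one uses \eqref{KK} and $h_i\xi_j=0$ from \eqref{4.2b}). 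This yields $\nabla_X\xi_i = Q^{-1}f h_i^* X - fX$.

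Finally, for \eqref{E-31A}: I compute $\pounds_{\xi_i}\widetilde{Q} = \pounds_{\xi_i}Q$ (since $\widetilde{Q} = Q - \mathrm{id}$ and $\pounds_{\xi_i}\mathrm{id}=0$) acting on a vector field $X$, namely $(\pounds_{\xi_i}Q)X = \pounds_{\xi_i}(QX) - Q\pounds_{\xi_i}X = [\xi_i,QX] - Q[\xi_i,X]$. The idea is to relate this to $f^2$ via \eqref{2.1}: since $Q = f^2 + \sum_j\eta^j\otimes\xi_j$, we get $\pounds_{\xi_i}Q = \pounds_{\xi_i}(f^2) + \sum_j \pounds_{\xi_i}(\eta^j\otimes\xi_j)$. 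The second sum vanishes because $\pounds_{\xi_i}\eta^j = 0$ (shown as in Theorem~\ref{thm6.2} using \eqref{2.3} and \eqref{3.Ld}) and $\pounds_{\xi_i}\xi_j = [\xi_i,\xi_j] = 0$ (from Corollary~\ref{cor3.1}). Then $\pounds_{\xi_i}(f^2) = (\pounds_{\xi_i}f)\circ f + f\circ(\pounds_{\xi_i}f) = 2h_i f + 2f h_i$, giving $\pounds_{\xi_i}\widetilde{Q} = 2(h_i f + f h_i)$, i.e. \eqref{E-31A}.

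The main obstacle I anticipate is \eqref{E-30b}: the bookkeeping of adjoints and the commutation $[Q,f]=0$ while pushing $f$ and $Q^{-1}$ past each other is delicate, and one must carefully verify that all $(\nabla_{\xi_i}f)$- and $N^{(5)}$-contributions genuinely drop out (rather than leaving a residual term), which relies essentially on the special values \eqref{KK} and on $h_i\xi_j = 0$. The identities \eqref{E-31} and \eqref{E-31A} are more routine once the vanishing of $\pounds_{\xi_i}\eta^j$ and $[\xi_i,\xi_j]$ is invoked.
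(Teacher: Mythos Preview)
Your plans for \eqref{E-31} and \eqref{E-31A} are essentially sound. For \eqref{E-31}, one caution: the leftover terms after subtracting do not cancel by any ``Killing-type'' property of $\xi_i$ (indeed $\xi_i$ is Killing only when $h_i=0$). As in the paper's argument, those terms regroup exactly into $-N^{(2)}_i(X,Y)$, and it is Theorem~\ref{thm6.2} ($N^{(2)}_i=0$) that kills them. For \eqref{E-31A}, your route through $\pounds_{\xi_i}(f^2)=(\pounds_{\xi_i}f)\,f+f\,(\pounds_{\xi_i}f)=2(h_if+fh_i)$ together with $\pounds_{\xi_i}(\eta^j\otimes\xi_j)=0$ is in fact cleaner than the paper's, which first passes to $\nabla_{\xi_i}(f^2)$ via \eqref{4.2} and then converts back to the Lie derivative.

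The genuine gap is \eqref{E-30b}. Applying $f$ to \eqref{4.2} gives
\[
 Q\,\nabla_X\xi_i = 2fh_iX - f(\nabla_{\xi_i}f)X + f\,\nabla_{fX}\,\xi_i,
\]
but the right-hand side still carries $f\,\nabla_{fX}\xi_i$; expanding this again via \eqref{4.2} with $X\to fX$ reintroduces $\nabla_{f^2X}\xi_i=\nabla_{QX}\xi_i$, and you are back where you started up to a $\widetilde Q$-shift. Nothing in your outline breaks this recursion, and the assertion that the $(\nabla_{\xi_i}f)$- and $N^{(5)}$-contributions ``genuinely drop out'' to leave $Q^{-1}fh_i^*-f$ is unsupported. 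The paper takes a completely different route: it specializes \eqref{3.1A} at $Y=\xi_i$ to obtain \eqref{4.4}, and then evaluates the term $g(N^{(1)}(\xi_i,Z),fX)$ directly from the Nijenhuis identity $[f,f](\,\cdot\,,\xi_i)=f\,N^{(3)}_i=2f h_i$ (see \eqref{2.5B} and \eqref{4.4A}). Combining this with \eqref{4.5}, \eqref{4.5A} and \eqref{E-30-xi} yields the explicit expression \eqref{4.6} for $g(f\,\nabla_X\xi_i,Z)$, and replacing $Z$ by $fZ$ gives \eqref{E-30b}. The ingredient you are missing is precisely this $N^{(1)}$-computation, which is what produces the $-fX$ (equivalently $-QfX$) part of the answer.
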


\begin{proof}
(i) The scalar product of \eqref{4.2} with $Y$,
using \eqref{3.1AA}, gives
\begin{align}\label{4.3}
 g((\pounds_{\xi_i}{f})X,Y) &= N^{\,(5)}(\xi_i, X, Y) + g({f}\nabla_{X}\,\xi_i - \nabla_{{f} X}\,\xi_i,\ Y).
\end{align}
Similarly,
\begin{align}\label{4.3b}
 g((\pounds_{\xi_i}{f})Y,X) &=
 N^{\,(5)}(\xi_i, Y, X) +g({f}\nabla_{Y}\,\xi_i - \nabla_{{f} Y}\,\xi_i,\ X).
\end{align}
Using \eqref{2.7X} and $(fX)(\eta^i(Y)) -(fY)(\eta^i(X))\equiv0$
(this vanishes if either $X$ or $Y$ equals $\xi_j$ and also for $X$ and $Y$ in~$f(TM)$), we get
 $N^{\,(2)}_i (X,Y) = \eta^i([f Y, X]-[f X, Y])$.
Thus, the difference of \eqref{4.3} and \eqref{4.3b} gives
\begin{align*}
 2\,g((h_i-h_i^*)X,Y) =  N^{\,(5)}(\xi_i, X, Y) - N^{\,(2)}_i (X,Y).
\end{align*}
From this and equality $N^{\,(2)}_i=0$ (see Theorem~\ref{thm6.2}) we get \eqref{E-31}.

(ii) From Corollary \ref{cor3.1} with $Y=\xi_i$, we find
\begin{align}\label{4.4}
 g((\nabla_{X}{f})\xi_i,Z) &= -\frac12\,g(N^{\,(1)}(\xi_i,Z),{f} X)
 -g({f} X, {f} Z)
 + \frac12\,N^{\,(5)}(X,\xi_i,Z).
\end{align}
Note that $\frac 12\,N^{\,(5)}(X,\xi_i,Z)= g(h_i Z, \widetilde Q X)$, see \eqref{KK}.
By \eqref{2.5} with $Y=\xi_i$, we get
\begin{align}\label{2.5B}
 [{f},{f}](X,\xi_i) = {f}^2 [X,\xi_i] - {f}[{f} X,\xi_i] = f N^{\,(3)}_i (X).
\end{align}
Using \eqref{2.2}, \eqref{3.3B} and \eqref{2.5B}, we calculate
\begin{align}
\label{4.4A}
 g([{f},{f}](\xi_i,Z),{f} X)&= g({f}^2\,[\xi_i,Z] - {f}[\xi_i,{f} Z],{f} X) = - g({f}(\pounds_{\xi_i}{f})Z,{f} X)\notag\\
 &= g((\pounds_{\xi_i}{f})Z,QX) -\sum\nolimits_{j}\eta^j(X)\,\eta^j((\pounds_{\xi_i}{f})Z) .
 \end{align}
From \eqref{2.3} we have
$g([X,\xi_i], \xi_k) = 2\,d\eta^k(\xi_i,X)=2\,\Phi(\xi_i,X)=0$.
By \eqref{6.1e}, we get
$g(\nabla_X\,\xi_i, \xi_k) = g(\nabla_{\xi_i}X, \xi_k) = -g(\nabla_{\xi_i}\xi_k, X) = 0$ for $X\in f(TM)$,
thus
\begin{align}\label{E-30-xi}
  g(\nabla_{X}\,\xi_i,\ \xi_k) = 0,\quad X\in TM,\ 1\le i,k \le p .
\end{align}
Using \eqref{4.2}, we get
\begin{align}\label{3.1A3}
 2\,g((\nabla_{\xi_i}{f})Y,\xi_j) \overset{\eqref{3.1AA}}= N^{\,(5)}(\xi_i,Y,\xi_j) \overset{\eqref{KK}}=0 .
\end{align}
From \eqref{4.2}, \eqref{E-30-xi} and \eqref{3.1A3} we get
\begin{align}\label{4.5}
 g((\pounds_{\xi_i}{f})X,\xi_j) = -g(\nabla_{{f} X}\,\xi_i,\xi_j) = 0.
\end{align}
Since ${f}\,\xi_i=0$, we find
\begin{align}\label{4.5A}
 (\nabla_{X}{f})\,\xi_i = -{f}\,\nabla_{X}\,\xi_i.
\end{align}
Thus, combining \eqref{4.4}, \eqref{4.4A} and \eqref{4.5}, we find
\begin{eqnarray}\label{4.6}
\nonumber
 & -g({f}\,\nabla_{X}\xi_i, Z) = g(X,QZ) - g(h_iZ,QX)  - \sum\nolimits_{j}\eta^j(X)\eta^j(Z) + g(h_i Z, \widetilde Q X) \\
 & = g(h_iZ, X) + g(X,QZ) - \sum\nolimits_{j}\eta^j(X)\,\eta^j(Z) + g(h_i Z, \widetilde Q X) .
\end{eqnarray}
Replacing $Z$ by ${f} Z$ in \eqref{4.6} and using \eqref{2.1}, \eqref{E-30-xi} and ${f}\,\xi_i=0$, we achieve \eqref{E-30b}:
\begin{align*}
  g(Q\,\nabla_{X}\,\xi_i, Z) = g(({f} Q -h_i{f}) Z, X)
 = g( {f}( h^*_i - Q) X, Z) .
\end{align*}

(iii) Using \eqref{2.1}, we obtain
\begin{eqnarray*}
 & {f}\nabla_{\xi_i}{f} +(\nabla_{\xi_i}{f}){f} = \nabla_{\xi_i}\,({f}^2)
 = \nabla_{\xi_i}\widetilde{Q} -\nabla_{\xi_i}(\sum\nolimits_{j}\eta^j\otimes \xi_j) ,
\end{eqnarray*}
where in view of \eqref{6.1e}, we get $\nabla_{\xi_i}(\sum\nolimits_{j}\eta^j\otimes \xi_j)=0$.
From the above and \eqref{4.2}, we get \eqref{E-31A}:
\begin{eqnarray*}
 && 2(h_i{f}+{f} h_i)X  = {f}(\pounds_{\xi_i}{f})X +(\pounds_{\xi_i}{f}){f} X \\
 && = {f}(\nabla_{\xi_i}{f})X +(\nabla_{\xi_i}{f}){f} X +{f}^2\nabla_X\,\xi_i -\nabla_{{f}^2 X}\,\xi_i \\
 && = -(\nabla_{\xi_i}\widetilde{Q})X -\widetilde{Q}\nabla_X\,\xi_i+\nabla_{\widetilde{Q}X}\,\xi_i
 +\sum\nolimits_{j}\big(g(\nabla_X\,\xi_i, \xi_j)\,\xi_j -g(X, \xi_j)\nabla_{\xi_j}\,\xi_i\big) \\
 && = [\widetilde{Q}X, \xi_i] - \widetilde{Q}\,[X, \xi_i]
  = -(\pounds_{\xi_i}\widetilde{Q})X .
\end{eqnarray*}
%
We used \eqref{6.1e} and
\eqref{E-30-xi}
to show $\sum\nolimits_{j}\big(g(\nabla_X\,\xi_i, \xi_j)\,\xi_j -g(X, \xi_j)\nabla_{\xi_j}\,\xi_i\big)=0$.
\end{proof}

\begin{remark}\rm
For a weak almost para-${\mathcal S}$-structure, using \eqref{3.1A3}, we find
\[
 2\,g(h_i X, \xi_j)=-g(\nabla_{f X}\,\xi_i, \xi_j) \overset{\eqref{E-30-xi}}= 0;
\]
thus,
the distribution
$f(TM)$ is invariant under $h_i$; moreover, $h^*_i\,\xi_j = 0$, see also \eqref{4.2b}.
\end{remark}

The next statement follows from Propositions~\ref{thm6.1} and \ref{lem6.1}.

\begin{corollary}
For a weak para-${\mathcal K}$-structure, we have
\begin{eqnarray}\label{3.1K}
\nonumber
 && 2\,g((\nabla_{X}{f})Y,Z) =
 \sum\nolimits_{i}\big( 2\,d\eta^i({f} Y,X)\,\eta^i(Z) - 2\,d\eta^i({f} Z,X)\,\eta^i(Y) \\
 && +\,\eta^i([\widetilde{Q}Y,\,{f} Z])\,\eta^i(X) \big)
 + N^{\,(5)}(X,Y,Z).
\end{eqnarray}
In particular, using \eqref{E-31} with $h_i=0$, gives
 $2\,g((\nabla_{\xi_i}{f})Y,Z) = \eta^i([\widetilde{Q} Y,\,{f} Z])$ for $1\le i\le p$.
\end{corollary}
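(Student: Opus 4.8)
The plan is to read \eqref{3.1K} off Proposition~\ref{lem6.1} by imposing on the master identity \eqref{3.1} the defining conditions of a weak para-$\mathcal{K}$-structure, and then to obtain the displayed special case by evaluating at $X=\xi_i$.

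Recall that a weak para-$\mathcal{K}$-structure is a normal metric weak para-$f$-structure with $d\Phi=0$. I would start from \eqref{3.1}. The hypothesis $d\Phi=0$ annihilates the two terms $-3\,d\Phi(X,fY,fZ)$ and $-3\,d\Phi(X,Y,Z)$; normality means $N^{\,(1)}\equiv 0$, so the term $-g(N^{\,(1)}(Y,Z),fX)$ drops as well. By Proposition~\ref{thm6.1}, normality also gives $N^{\,(3)}_i=N^{\,(4)}_{ij}=0$ and, crucially, $N^{\,(2)}_i(Y,Z)=\eta^i([\widetilde{Q}Y,fZ])$, which I substitute into the $\eta^i(X)$-coefficient in \eqref{3.1}. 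What remains on the right-hand side is exactly $\sum_i\big(2\,d\eta^i(fY,X)\,\eta^i(Z)-2\,d\eta^i(fZ,X)\,\eta^i(Y)+\eta^i([\widetilde{Q}Y,fZ])\,\eta^i(X)\big)+N^{\,(5)}(X,Y,Z)$, i.e., \eqref{3.1K}.

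For the special case I would put $X=\xi_i$ in \eqref{3.1K}. Normality gives \eqref{3.11A}, so $d\eta^j(fY,\xi_i)=d\eta^j(fZ,\xi_i)=0$ and the first two sums vanish, while $\eta^j(\xi_i)=\delta^j_i$ collapses the last sum to $\eta^i([\widetilde{Q}Y,fZ])$; thus $2\,g((\nabla_{\xi_i}f)Y,Z)=\eta^i([\widetilde{Q}Y,fZ])+N^{\,(5)}(\xi_i,Y,Z)$. It then remains to show $N^{\,(5)}(\xi_i,Y,Z)=0$. Here I would use that normality also forces $N^{\,(3)}_i=\pounds_{\xi_i}f=0$, so $h_i=\frac12 N^{\,(3)}_i=0$ and $h_i^*=0$; inserting $h_i=0$ into \eqref{E-31} gives $N^{\,(5)}(\xi_i,Y,Z)=0$, and the claimed formula $2\,g((\nabla_{\xi_i}f)Y,Z)=\eta^i([\widetilde{Q}Y,fZ])$ follows.

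The main point needing care is the appeal to \eqref{E-31}: it was proved assuming a weak almost para-$\mathcal{S}$-structure, where $N^{\,(2)}_i=0$, while on a weak para-$\mathcal{K}$-manifold $N^{\,(2)}_i$ is generally nonzero. I expect to re-run that argument directly in the present setting: take the scalar product of \eqref{4.2} with $Y$, use \eqref{3.1K} with $X=\xi_i$ in place of \eqref{3.1AA} to rewrite $g((\nabla_{\xi_i}f)X,Y)$, antisymmetrize in $X$ and $Y$, and combine the skew-symmetry of $N^{\,(2)}_i$ with the identity $g(f\nabla_X\xi_i-\nabla_{fX}\xi_i,Y)-g(f\nabla_Y\xi_i-\nabla_{fY}\xi_i,X)=-N^{\,(2)}_i(X,Y)$ from the proof of \eqref{E-31}. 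The two $N^{\,(2)}_i$-contributions then cancel, leaving $2\,g((h_i-h_i^*)X,Y)=N^{\,(5)}(\xi_i,X,Y)$ exactly as in the weak almost para-$\mathcal{S}$ case, so that $h_i=0$ indeed yields $N^{\,(5)}(\xi_i,\cdot,\cdot)=0$. Everything else is routine substitution into \eqref{3.1} and \eqref{3.1K}.
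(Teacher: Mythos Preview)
Your proposal is correct and follows exactly the route the paper indicates: the paper offers no detailed proof, merely stating that the corollary ``follows from Propositions~\ref{thm6.1} and \ref{lem6.1}'' and that the special case comes from \eqref{E-31} with $h_i=0$, which is precisely your substitution of $d\Phi=0$, $N^{(1)}=0$, and \eqref{3.1KK} into \eqref{3.1}. You even go further than the paper by noticing that \eqref{E-31} is only established for weak almost para-$\mathcal{S}$-structures and by outlining the adaptation to the weak para-$\mathcal{K}$ setting (where the extra $\eta^i([\widetilde{Q}\,\cdot,\,f\,\cdot\,])$ term from \eqref{3.1K} produces an $N^{(2)}_i$-contribution that cancels the one arising in the proof of \eqref{E-31}); this is a genuine refinement of the paper's terse justification.
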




\section{The rigidity of a para-${\mathcal S}$-structure}
\label{sec:3}

An important class of metric para-$f$-manifolds is given by para-${\mathcal S}$-manifolds.
Here, we study a wider class of weak para-${\mathcal S}$-manifolds and prove the rigidity theorem for para-${\mathcal S}$-manifolds.

\begin{proposition}
For a weak para-${\mathcal S}$-structure we get
\begin{eqnarray}\label{4.10} 
\nonumber
 & g((\nabla_{X}{f})Y,Z)  = g(QX,Z)\,\bar\eta(Y) - g(QX,Y)\,\bar\eta(Z) +\frac{1}{2}\, N^{\,(5)}(X,Y,Z) \\
 & -\sum\nolimits_{j} \eta^j(X)\big(\bar\eta(Y)\eta^j(Z) - \eta^j(Y)\bar\eta(Z)\big) .
\end{eqnarray}
\end{proposition}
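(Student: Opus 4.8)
The plan is to obtain \eqref{4.10} as an immediate specialization of Corollary~\ref{cor3.1}. By definition a weak para-${\mathcal S}$-structure is a \emph{normal} weak almost para-${\mathcal S}$-structure, so Theorem~\ref{thm6.2} and Corollary~\ref{cor3.1} are at our disposal, and in addition $N^{\,(1)}\equiv0$ by normality. Hence \eqref{3.1A} collapses to
\[
 2\,g((\nabla_{X}{f})Y,Z) = 2\,g({f} X,{f} Y)\,\bar\eta(Z) - 2\,g({f} X,{f} Z)\,\bar\eta(Y) + N^{\,(5)}(X,Y,Z).
\]

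Next I would eliminate the terms $g({f} X,{f} Y)$ and $g({f} X,{f} Z)$ by means of the compatibility condition \eqref{2.2}, writing $g({f} X,{f} Y) = -g(X,Q\,Y) + \sum\nolimits_i\eta^i(X)\,\eta^i(Y)$ and similarly with $Z$ in place of $Y$. Substituting these and invoking the self-adjointness of $Q$ from \eqref{E-Q2-g} to replace $g(X,Q\,Y)$ by $g(QX,Y)$ and $g(X,Q\,Z)$ by $g(QX,Z)$, one is left, after dividing by $2$, with
\begin{align*}
 g((\nabla_{X}{f})Y,Z) ={}& g(QX,Z)\,\bar\eta(Y) - g(QX,Y)\,\bar\eta(Z) + \frac{1}{2}\,N^{\,(5)}(X,Y,Z) \\
 &+ \sum\nolimits_i\eta^i(X)\big(\eta^i(Y)\,\bar\eta(Z) - \eta^i(Z)\,\bar\eta(Y)\big).
\end{align*}
It then remains to note the elementary identity $\sum\nolimits_i\eta^i(X)\big(\eta^i(Y)\,\bar\eta(Z) - \eta^i(Z)\,\bar\eta(Y)\big) = -\sum\nolimits_j\eta^j(X)\big(\bar\eta(Y)\,\eta^j(Z) - \eta^j(Y)\,\bar\eta(Z)\big)$, which is precisely the last term of \eqref{4.10}.

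Since each step is a substitution, there is no genuine obstacle here; the only delicate points are keeping the sign of the $g(X,Q\,Y)$ term in \eqref{2.2} correct (the para-case differs from the almost-contact case precisely in this sign) and pairing the $\eta^i\!\otimes\!\eta^i$ contributions against $\bar\eta=\sum_j\eta^j$ so that they assemble into the final summation of \eqref{4.10}. One could alternatively read \eqref{4.10} off directly from Proposition~\ref{lem6.1}, by setting $d\eta^i=\Phi$ and $N^{\,(1)}=N^{\,(2)}_i=0$ (the latter by Theorem~\ref{thm6.2}) in \eqref{3.1} and then applying \eqref{2.2} and \eqref{E-Q2-g} as above; this route amounts to essentially the same computation.
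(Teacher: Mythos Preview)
Your proof is correct and follows precisely the paper's own route: the paper's proof consists of the single sentence ``Since $({f},Q,\xi_i,\eta^i,g)$ is a metric weak $f$-structure with $N^{\,(1)}=0$, by Corollary~\ref{cor3.1}, we get~\eqref{4.10},'' and you have simply written out the substitution of \eqref{2.2} into \eqref{3.1A} that this sentence leaves implicit. The sign bookkeeping and the rewriting of the $\eta^i$-sum are all correct.
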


\begin{proof} Since $({f},Q,\xi_i,\eta^i,g)$ is a metric
weak $f$-structure with $N^{\,(1)}=0$, by Corollary~\ref{cor3.1}, we get~\eqref{4.10}.
\end{proof}

\begin{remark}\rm
Using $Y=\xi_i$ in \eqref{4.10}, we get $f\nabla_{X}\,\xi_i = -f^2X - \frac{1}{2}\,(N^{\,(5)}(X,\xi_i,\,\cdot))^\flat$,
which gene\-ralizes the equality  $\nabla_{X}\,\xi_i=-fX$ for a para-${\mathcal S}$-structure, e.g., \cite{FP-2017}.
%
\end{remark}

It was shown in \cite{RWo-2} that
a weak almost para-${\mathcal S}$-structure with positive partial Ricci curvature can be deformed to an almost para-${\mathcal S}$-structure.
The~main result in this section is the following rigidity theorem.

\begin{theorem}\label{T-4.1}
A metric weak para-$f$-structure
is a weak para-${\mathcal S}$-structure if and only if it is a para-${\mathcal S}$-structure.
\end{theorem}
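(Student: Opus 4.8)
The plan is to show that the only weak para-$\mathcal{S}$-structure is the classical one, i.e., that $\widetilde Q = 0$, equivalently $Q = \mathrm{id}_{TM}$. The "if" direction is trivial since any para-$\mathcal{S}$-structure is a weak para-$\mathcal{S}$-structure with $Q=\mathrm{id}$. For the "only if" direction, I would exploit the structural equations established earlier, together with the normality condition, to derive strong algebraic constraints on $\widetilde Q$. The starting point is the covariant-derivative formula \eqref{4.10} for a weak para-$\mathcal{S}$-structure, specialized along the characteristic directions, and the tensor identities \eqref{E-31}, \eqref{E-30b}, \eqref{E-31A} from Section~\ref{sec:3a}.

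First I would use that on a weak para-$\mathcal{S}$-manifold the characteristic vector fields $\xi_i$ are Killing (this forces $N^{(3)}_i = 2h_i = 0$ by Theorem~\ref{thm6.2}, once we know the structure is normal; for a weak para-$\mathcal{S}$-structure normality is built in). With $h_i = 0$, identity \eqref{E-31A} gives $\pounds_{\xi_i}\widetilde Q = 0$, and \eqref{E-30b} reduces to $\nabla\,\xi_i = -Q^{-1}f\,Q - f$; but on the other hand, from the Remark after \eqref{4.10} (using $h_i=0$) we have $f\nabla_X\xi_i = -f^2 X$, and combined with \eqref{E-30-xi} this should pin down $\nabla_X\xi_i = -fX$ exactly, already a classical-looking identity. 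The next step is to feed this back into the expression for $(\nabla_X f)Y$ and compute the curvature term $R(X,\xi_i)\xi_j$, or more directly to differentiate the compatibility relation \eqref{2.2} and the defining relation $f^2 = Q - \sum\eta^i\otimes\xi_i$ along $\xi_i$ to extract an equation purely in $\widetilde Q$.

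The cleanest route I expect: since $\widetilde Q$ is $f$-commuting, self-adjoint, vanishes on $\ker f$, and preserves $f(TM)$, work on $f(TM)$ where $f$ is invertible. Using $f^3 = fQ$ on $f(TM)$ we get $f^2 = Q = \mathrm{id} + \widetilde Q$ there. Then differentiate $g((\nabla_X f)Y,Z)$ using \eqref{4.10} and also compute $\nabla_X(f^2)$ directly from $\nabla\xi_i = -f$ (so $\nabla_X Q = \nabla_X \widetilde Q$ is controlled); comparing the two expressions for $\nabla_X(f^2)Y = (\nabla_X f)(fY) + f(\nabla_X f)Y$ yields a first-order PDE for $\widetilde Q$. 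Combined with $\pounds_{\xi_i}\widetilde Q = 0$ and the algebraic symmetries, I would aim to show $\nabla\widetilde Q = 0$ first, then that a parallel, $f$-commuting, characteristic-annihilating tensor satisfying the trace/eigenvalue constraints forced by \eqref{2.2} and $d\eta^i = \Phi$ must be zero — most plausibly by evaluating the closed $2$-form identity $d\eta^i = \Phi$ together with $d\Phi = d\,d\eta^i = 0$ and the now-known $\nabla\xi_i$, which over-determines things unless $\widetilde Q = 0$.

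The main obstacle is the algebraic endgame: showing that the constraints do not merely restrict $\widetilde Q$ but annihilate it. The danger is a nonzero parallel $\widetilde Q$ with eigenvalues arranged symmetrically (as in the positive-definite splitting $f(TM) = \mathcal{D}_+ \oplus \mathcal{D}_-$ from the Remark) slipping through. I expect this is killed by the interplay of $d\eta^i = \Phi$ with the eigenvalue structure: on $\mathcal{D}_\pm$ the form $\Phi$ and the relation $f^2 = \mathrm{id} + \widetilde Q$ together force the eigenvalues of $\widetilde Q$ to be $0$, because $\Phi$ being both closed and equal to each $d\eta^i$ rigidifies the "radii" $\sqrt{1+\lambda}$ of the eigendistributions. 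I would isolate this as the one genuinely delicate lemma and handle the rest by the bookkeeping already set up in Sections~\ref{sec:2}--\ref{sec:3a}.
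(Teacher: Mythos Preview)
Your proposal misses the key observation and, as a result, the ``delicate lemma'' you defer to the end is actually the entire content of the theorem. The paper's argument is direct and purely algebraic: it computes the $\ker f$-component of $N^{(1)}$ on a weak almost para-$\mathcal{S}$-structure and finds (equation~\eqref{4.18}, which is essentially \eqref{E-N1} from Theorem~\ref{thm6.2})
\[
 g(N^{(1)}(X,Y),\xi_i) = 2\,g(\widetilde Q X, fY)\quad\text{for all }i.
\]
Normality then gives $g(\widetilde Q X, fY)=0$ for all $X,Y$; since $f$ maps onto $f(TM)$ and $\widetilde Q(TM)\subset f(TM)$, this forces $\widetilde Q=0$ immediately. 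No differentiation of $\widetilde Q$, no PDE, no curvature, no eigenvalue analysis.

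Your route instead tries to show $\nabla\widetilde Q=0$ and then argue algebraically that a parallel such tensor vanishes. Two problems. First, the step ``$d\Phi=d\,d\eta^i=0$'' carries no information whatsoever beyond the definition $\Phi=d\eta^i$, so that part of the endgame is vacuous. Second, the danger you correctly flag --- a nonzero parallel $\widetilde Q$ proportional to the projection onto $f(TM)$ --- is \emph{exactly} what the identity above kills, but nothing in your outline (parallelism, $f$-commutation, $\pounds_{\xi_i}\widetilde Q=0$, the splitting $\mathcal D_+\oplus\mathcal D_-$) excludes it. You would eventually have to project $N^{(1)}$ onto $\ker f$ to close the argument, at which point you recover the paper's proof. (Minor: with $h_i^*=0$, \eqref{E-30b} gives $\nabla\xi_i=-f$, not $-Q^{-1}fQ-f$.)
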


\begin{proof}
Let $({f},Q,\xi_i,\eta^i,g)$ be a weak para-${\mathcal S}$-structure.
Since $N^{\,(1)}=0$, by Proposition~\ref{thm6.1}, we get $N^{\,(3)}_i=0$.
By \eqref{KK}, we then obtain $N^{\,(5)}(\cdot\,,\xi_i,\,\cdot\,)=0$.
Recall that $\tilde{Q}X = Q X - X$ and $\eta^j(\widetilde{Q} X)=0$.
Using the above  and $Y=\xi_i$ in \eqref{4.10}, we~get
\begin{align}\label{4.12}
\nonumber
 & g((\nabla_{X}{f})\,\xi_i,Z)  = g(QX,Z) -\eta^i(Q X)\,\bar\eta(Z)
 +\sum\nolimits_{j} \eta^j(X)\big(\eta^j(Z) - \delta^j_i\,\bar\eta(Z)\big)\\
\nonumber
 & = g(Q X^\top, Z) +\sum\nolimits_{j} \eta^j(Z)\big(\eta^j(Q X) - \eta^i(Q X)\big)
-\sum\nolimits_{j} \eta^j(Z)\big(\eta^j(X) - \eta^i(X)\big)\\
 & = g(Q X^\top, Z) +\sum\nolimits_{j} \eta^j(Z)\big(\eta^j(\widetilde{Q} X) - \eta^i(\widetilde{Q} X)\big)
 = g(Q X^\top, Z) .
\end{align}
Using \eqref{4.5A}, we rewrite \eqref{4.12} as $g(\nabla_{X}\,\xi_i,{f} Z) = g(Q X^\top,Z)$.
By the above and \eqref{2.1}, we find
\begin{align}\label{4.14}
 g(\nabla_X\,\xi_i +{f} X^\top, \,{f}\,Z) = 0.
\end{align}
Since ${f}$ is skew-symmetric, applying \eqref{4.10} with $Z=\xi_i$ in \eqref{4.NN}, we obtain
\begin{eqnarray}\label{4.17}
&& g( [{f},{f}](X,Y),\xi_i) = g([{f} X, {f} Y], \xi_i) = g((\nabla_{{f} X}{f})Y, \xi_i) - g((\nabla_{{f} Y}{f})X, \xi_i)   \notag\\
&&\quad = g(Q\,{f} Y,X) - g(Q\,{f} Y, \xi_i)\,\bar\eta(X) -g(Q\,{f} X,Y) +g(Q\,{f} X, \xi_i)\,\bar\eta(Y) .
\end{eqnarray}
Recall that $[Q,\,{f}]=0$ and $f\,\xi_i=0$. Thus, \eqref{4.17} yields for all $i$,
\begin{align*}
  g( [{f},{f}](X,Y), \xi_i) = 2\,g(QX,{f} Y) .
\end{align*}
From this, using the definition of $N^{\,(1)}$, we get for all $i$,
\begin{align}\label{4.18}
 g(N^{\,(1)}(X,Y), \xi_i)  = 2\,g(\widetilde{Q} X, {f} Y) .
\end{align}
From $N^{\,(1)}=0$ and \eqref{4.18} we get $g(\widetilde{Q} X, {f} Y)=0$ for all $X,Y\in \mathfrak{X}_M$; thus, $\widetilde Q=0$.
\end{proof}

For a weak almost para-${\mathcal S}$-structure all $\xi_i$ are Killing if and only if $h=0$, see Theorem~\ref{thm6.2}.
The equality $h=0$ holds for a weak para-${\mathcal S}$-structure since it is true for a para-${\mathcal S}$-structure, see Theorem~\ref{T-4.1}.
We~will prove this property of a weak para-${\mathcal S}$-structure directly.

\begin{corollary}
For a weak para-${\mathcal S}$-structure, $\xi_1,\ldots,\xi_p$ are Killing vector fields; moreover, $\ker f$
is integrable and
defines a Riemannian totally geodesic foliation.
\end{corollary}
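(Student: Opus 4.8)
The plan is to derive the Killing property of the $\xi_i$ directly from the formulas already established for a weak para-${\mathcal S}$-structure, without invoking Theorem~\ref{T-4.1}. First I would recall from the proof of Theorem~\ref{T-4.1} that on a weak para-${\mathcal S}$-structure $N^{\,(3)}_i=0$ (via Proposition~\ref{thm6.1}, since $N^{\,(1)}=0$), hence $N^{\,(5)}(\cdot,\xi_i,\cdot)=0$ by \eqref{KK}, and in fact $h_i=\tfrac12 N^{\,(3)}_i=0$. Then Theorem~\ref{thm6.2} immediately gives that each $\xi_i$ is a Killing vector field, since $N^{\,(3)}_i=0$ is equivalent to $\xi_i$ being Killing for a weak almost para-${\mathcal S}$-structure. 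Alternatively, and more in the spirit of a ``direct'' proof, I would use \eqref{E-30b}: with $h_i^*=0$ this reads $\nabla\xi_i=-f$, so $(\pounds_{\xi_i}g)(X,Y)=g(\nabla_X\xi_i,Y)+g(\nabla_Y\xi_i,X)=-g(fX,Y)-g(fY,X)=0$ by skew-symmetry of $f$, confirming $\xi_i$ is Killing.

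Next I would establish integrability of $\ker f$. From \eqref{2.3} we have $2\,d\eta^k(\xi_i,\xi_j)=g([\xi_i,\xi_j],\xi_k)$ on one hand (by \eqref{2.9X}), and $2\,d\eta^k(\xi_i,\xi_j)=2\,\Phi(\xi_i,\xi_j)=2\,g(\xi_i,f\xi_j)=0$ on the other, since $f\xi_j=0$. This forces the $\ker f$-component of $[\xi_i,\xi_j]$ to vanish. Moreover \eqref{3.11A} (which holds here because $N^{\,(1)}=0$, cf.\ Proposition~\ref{thm6.1}) gives $g([\xi_i,\xi_j],fX)=0$ for all $X$, so the $f(TM)$-component also vanishes; together $[\xi_i,\xi_j]=0$, which gives both $\nabla_{\xi_i}\xi_j=\nabla_{\xi_j}\xi_i$ and integrability of $\ker f$.

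Finally I would prove that the foliation is totally geodesic and Riemannian. Total geodesy is \eqref{6.1e}, already obtained in Corollary~\ref{cor3.1}: $\nabla_{\xi_i}\xi_j\in\ker f$ from \eqref{3.1AA}, combined with $\nabla_{\xi_i}\xi_j\in f(TM)$ from differentiating $g(\xi_j,\xi_k)=\delta_{jk}$ and using $[\xi_i,\xi_j]=0$, forces $\nabla_{\xi_i}\xi_j=0$; in particular $\nabla_XY+\nabla_YX\in\ker f$ for $X,Y\in\ker f$. The Riemannian property then follows because the orthogonal distribution $f(TM)$ is preserved by the flows of the Killing fields $\xi_i$: indeed a foliation all of whose leaves are orbits of (commuting) Killing fields is Riemannian, so one invokes the remark in Section~\ref{sec:2} that a foliation invariant under isometries generated by Killing vector fields is Riemannian.

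The main obstacle is essentially bookkeeping rather than a genuine difficulty: one must be careful that every formula borrowed from Section~\ref{sec:2} and \ref{sec:3a} (namely \eqref{E-30b}, \eqref{3.1AA}, \eqref{6.1e}, \eqref{3.11A}) is valid for a weak para-${\mathcal S}$-structure, which it is because the normality condition $N^{\,(1)}=0$ upgrades a weak almost para-${\mathcal S}$-structure to a weak para-${\mathcal S}$-structure and thereby triggers Proposition~\ref{thm6.1}. The only subtlety worth spelling out is the argument that $[\xi_i,\xi_j]=0$ — deducing the vanishing of both components of the bracket separately — since this is what allows the clean conclusion $\nabla_{\xi_i}\xi_j=0$ and hence the totally geodesic (indeed flat-leaf) structure of $\ker f$.
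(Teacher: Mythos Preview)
Your argument is correct and considerably more economical than the paper's. The paper's proof of the Killing property is a direct computation: it substitutes $Y=\xi_i$ into \eqref{4.10} to obtain a new identity (call it $(4.6')$), then compares this with \eqref{4.6} (valid for any weak almost para-${\mathcal S}$-structure) and uses \eqref{E-30-xi} to conclude $g(h_iZ,QX)=\sum_j\eta^j(X)\eta^j(Z)-\eta^i(X)\bar\eta(Z)$, whence $h_i=0$ on $f(TM)$; together with \eqref{4.2b} this gives $h_i=0$, and Theorem~\ref{thm6.2} yields the Killing property. You bypass all of this by simply invoking Proposition~\ref{thm6.1}: normality already forces $N^{\,(3)}_i=0$, hence $h_i=0$ immediately. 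For the Riemannian property the paper gives the explicit computation $(\pounds_{\xi_i}g)(X,Y)=-g(\nabla_XY+\nabla_YX,\xi_i)=0$ for $X,Y\in f(TM)$, showing $f(TM)$ is totally geodesic; your appeal to the general remark about Killing-invariant foliations amounts to the same thing. What the paper's route buys is a self-contained derivation within Section~\ref{sec:3} using only the new formula \eqref{4.10}; what yours buys is brevity.

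One small repair: your citation of \eqref{3.11A} to show the $f(TM)$-component of $[\xi_i,\xi_j]$ vanishes is not quite right --- \eqref{3.11A} says $d\eta^j(\xi_i,\cdot)=0$, which with $d\eta^j=\Phi$ only recovers the triviality $g(\xi_i,f\cdot)=0$. The clean way (already implicit in your later paragraph) is: from \eqref{3.1AA} with $Y=\xi_j$ one gets $\nabla_{\xi_i}\xi_j\in\ker f$, hence $[\xi_i,\xi_j]=\nabla_{\xi_i}\xi_j-\nabla_{\xi_j}\xi_i\in\ker f$; combined with your computation $\eta^k([\xi_i,\xi_j])=0$ this forces $[\xi_i,\xi_j]=0$. (Alternatively, simply cite \eqref{6.1e} from Corollary~\ref{cor3.1}, which already holds for weak almost para-${\mathcal S}$-structures, and the bracket vanishes at once.)
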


\begin{proof}
In view of \eqref{4.5A} and $\bar\eta(\xi_i)=1$, Eq. \eqref{4.10} with $Y=\xi_i$ becomes
\begin{align}\label{4.6A}
 g(\nabla_{X}\,\xi_i, {f} Z) = -\eta^i(X)\,\bar\eta(Z) +g(X,QZ) + \frac 12\,N^{\,(5)}(X,\xi_i,Z) .
\end{align}
Combining \eqref{4.6} and \eqref{4.6A}, and using \eqref{E-30-xi},
we achieve for all $i$ and $X,Z$,
\begin{align*}
 g(h_iZ,QX) = \sum\nolimits_{j}\eta^j(X)\,\eta^j(Z) -\eta^i(X)\,\bar\eta(Z),
\end{align*}
which implies $hZ=0$ for $Z\in f(TM)$ (since $Q$ is nonsingular).
This and \eqref{4.2b} yield $h=0$.
By~Theorem~\ref{thm6.2}, $\ker f$ defines a totally geodesic foliation. Since $\xi_i$ is a Killing field,
we~get
\[
 0 = (\pounds_{\xi_i}\,g)(X,Y) = g(\nabla_{X}\,\xi_i, Y) + g(\nabla_{Y}\,\xi_i, X)
 = -g(\nabla_{X} Y + \nabla_{Y} X,\ \xi_i)
\]
for all $i$ and $X,Y\bot\,\ker f$. Thus, $f(TM)$ is totally geodesic, i.e., $\ker f$ defines a Riemannian foliation.
\end{proof}

For $p=1$, from Theorem~\ref{T-4.1} we have the following

\begin{corollary}
A weak almost paracontact metric structure on $M^{2n+1}$ is a weak para-Sasakian structure if and only if it is a para-Sasakian structure,
i.e., a normal weak paracontact metric structure, on $M^{2n+1}$.
\end{corollary}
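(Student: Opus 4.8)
The plan is to derive the statement directly from Theorem~\ref{T-4.1} by specializing to $p=1$. First I would recall the terminology: for $p=1$ a weak almost para-$f$-structure is a \emph{weak almost paracontact structure} $(\varphi,Q,\xi,\eta)$, a weak para-${\mathcal S}$-structure with $p=1$ is by definition a \emph{weak para-Sasakian structure}, and a para-${\mathcal S}$-structure with $p=1$ is a para-Sasakian structure, which (as recorded earlier in the text) coincides with a normal weak paracontact metric structure once one knows $Q=\mathrm{id}_{TM}$. So the content of the corollary is exactly: a weak almost paracontact metric structure satisfying $d\eta=\Phi$ together with normality is a para-Sasakian structure, and conversely.

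The two implications are then immediate. For the forward direction, suppose $(\varphi,Q,\xi,\eta,g)$ is a weak para-Sasakian structure on $M^{2n+1}$; this is in particular a metric weak para-$f$-structure which is a weak para-${\mathcal S}$-structure, so Theorem~\ref{T-4.1} applies and yields $\widetilde Q=0$, i.e.\ $Q=\mathrm{id}_{TM}$. Hence all the defining relations \eqref{E-fQ-1}, \eqref{2.1}, \eqref{2.2} collapse to the classical paracontact identities $\varphi^3=\varphi$, $\varphi^2=\mathrm{id}-\eta\otimes\xi$, $g(\varphi X,\varphi Y)=-g(X,Y)+\eta(X)\eta(Y)$, and together with normality and $d\eta=\Phi$ this is precisely the standard definition of a para-Sasakian structure. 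For the converse, a para-Sasakian structure is a normal metric para-$f$-structure with $d\eta^i=\Phi$ and $Q=\mathrm{id}_{TM}$; since $Q=\mathrm{id}_{TM}$ is a special nonsingular choice, it trivially satisfies \eqref{E-fQ-1} and the weak axioms, so it is a weak para-${\mathcal S}$-structure, i.e.\ (for $p=1$) a weak para-Sasakian structure.

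I do not anticipate a genuine obstacle here: the corollary is purely a transcription of Theorem~\ref{T-4.1} into the $p=1$ vocabulary, and the only point requiring a sentence of care is spelling out that ``weak para-Sasakian'' $=$ ``weak para-${\mathcal S}$ with $p=1$'' and that ``para-Sasakian'' $=$ ``normal weak paracontact metric'' $=$ ``para-${\mathcal S}$ with $p=1$ and $Q=\mathrm{id}_{TM}$'', so that the equivalence in Theorem~\ref{T-4.1} specializes verbatim. If anything needs verification it is the compatibility of the two descriptions of a para-Sasakian manifold (via $\varphi^3=\varphi$, normality, $d\eta=\Phi$ versus via normality of the associated almost paracontact metric structure), but this is classical and already alluded to in the Introduction.
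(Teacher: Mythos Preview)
Your proposal is correct and follows exactly the route the paper takes: the paper simply states that the corollary follows from Theorem~\ref{T-4.1} upon setting $p=1$, and your argument is precisely this specialization together with the (appropriate) unpacking of the $p=1$ terminology. No additional ideas are needed or used in the paper.
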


\section{The characteristic of a weak para-${\mathcal C}$-structure}
\label{sec:4}

An important class of metric para-$f$-manifolds is given by para-${\mathcal C}$-mani\-folds.
Recall that $\nabla_{X}\,\xi_i=0$ holds on para-${\mathcal C}$-manifolds.

\begin{proposition}
Let $({f},Q,\xi_i,\eta^i,g)$ be a weak
para-${\mathcal C}$-structure. Then
\begin{align}\label{6.1}
 & 2\,g((\nabla_{X}{f})Y,Z) = N^{\,(5)}(X,Y,Z),\\
\label{6.1b}
 & 0 = N^{\,(5)}(X,Y,Z) + N^{\,(5)}(Y,Z,X) + N^{\,(5)}(Z, X, Y) ,\\
\label{6.1c}
 0 & = N^{\,(5)}({f} X,Y,Z) + N^{\,(5)}({f} Y,Z,X) + N^{\,(5)}({f} Z, X, Y) .
\end{align}
Using \eqref{6.1} with $Y=\xi_i$ and \eqref{2.1}, we get
\begin{align*}
 g(\nabla_{X}\,\xi_i,\,Q Z) = -\frac12\,N^{\,(5)}(X,\xi_i,{f} Z).
\end{align*}
\end{proposition}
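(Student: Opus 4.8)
The plan is to read both cyclic identities off Proposition~\ref{lem6.1} combined with the defining properties of a weak para-$\mathcal{C}$-structure. Recall that a weak para-$\mathcal{C}$-structure is, by definition, a subclass of weak para-$\mathcal{K}$-structures, so it is normal ($N^{\,(1)}=0$) and satisfies $d\Phi=0$; moreover $d\eta^i=0$, whence by \eqref{2.7X} the tensors $N^{\,(2)}_i$ vanish and every $d\eta^i$-term in \eqref{3.1} disappears. Feeding all of this into \eqref{3.1} leaves only $2\,g((\nabla_X f)Y,Z)=N^{\,(5)}(X,Y,Z)$, i.e.\ \eqref{6.1}.

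For \eqref{6.1b}, use that $f$ is skew-symmetric (see \eqref{E-Q2-g}), so $\Phi$ is a genuine $2$-form with $(\nabla_X\Phi)(Y,Z)=g(Y,(\nabla_X f)Z)$; hence, for the Levi-Civita connection, the co-boundary formula \eqref{3.3} becomes $3\,d\Phi(X,Y,Z)=(\nabla_X\Phi)(Y,Z)+(\nabla_Y\Phi)(Z,X)+(\nabla_Z\Phi)(X,Y)$. As $d\Phi=0$, the cyclic sum of $g((\nabla_X f)Y,Z)$ over $(X,Y,Z)$ vanishes, and after doubling and invoking \eqref{6.1} this is exactly \eqref{6.1b}.

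The identity \eqref{6.1c} is where the real work lies, and I expect it to be the main obstacle. Normality enters once more, now through \eqref{4.NN}: since $d\eta^i=0$ and the structure is normal, $[f,f]=N^{\,(1)}=0$ by \eqref{2.6X}, so expanding $g([f,f](X,Y),Z)=0$ by means of \eqref{4.NN}, moving $f$ across the metric in the terms of the form $g(f(\nabla_Y f)X,Z)$, and using that $\nabla_X f$ is itself skew-symmetric, one obtains $B(X,Y,Z)+B(Y,Z,X)=A(Y,X,fZ)-A(X,Y,fZ)$, where $A(X,Y,Z):=g((\nabla_X f)Y,Z)$ and $B(X,Y,Z):=A(fX,Y,Z)$. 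Applying \eqref{6.1b} to the triples $(X,Y,fZ)$ and $(Y,X,fZ)$ and using the skew-symmetry of $A$ in its last two slots rewrites the right-hand side as $B(Z,X,Y)$, so $B(X,Y,Z)+B(Y,Z,X)=B(Z,X,Y)$. Summing this relation over the three cyclic permutations of $(X,Y,Z)$ gives $2\sum_{\mathrm{cyc}}B=\sum_{\mathrm{cyc}}B$, hence $\sum_{\mathrm{cyc}}B=0$; multiplying by $2$ and using \eqref{6.1} once more yields \eqref{6.1c}. The delicate point is purely bookkeeping: every sign when pushing $f$ through $g$, when applying skew-symmetry of $\nabla_X f$, and when matching the cyclic pattern of \eqref{6.1b} against the $B$-terms must be tracked carefully.

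The closing identity then follows at once: put $Y=\xi_i$ in \eqref{6.1}; since $f\,\xi_i=0$ one has $(\nabla_X f)\xi_i=-f\,\nabla_X\,\xi_i$, so $2\,g(\nabla_X\,\xi_i,fZ)=N^{\,(5)}(X,\xi_i,Z)$ by skew-symmetry of $f$. Replacing $Z$ by $fZ$, using $f^{\,2}Z=QZ-\sum_j\eta^j(Z)\,\xi_j$ from \eqref{2.1}, and using $g(\nabla_X\,\xi_i,\xi_j)=0$ (established for a weak para-$\mathcal{C}$-structure much as \eqref{E-30-xi} was, from $d\eta^i=0$ and \eqref{6.1e}) gives the displayed expression for $g(\nabla_X\,\xi_i,QZ)$.
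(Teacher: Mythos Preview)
Your proposal is correct and follows essentially the same route as the paper. Both arguments obtain \eqref{6.1} by feeding the weak para-$\mathcal{C}$ conditions ($N^{(1)}=0$, $d\Phi=0$, $d\eta^i=0$, hence $N^{(2)}_i=0$) into \eqref{3.1}; both obtain \eqref{6.1b} from $d\Phi=0$ via the coboundary formula; and both obtain \eqref{6.1c} by expanding $g([f,f](X,Y),Z)=0$ through \eqref{4.NN} and \eqref{6.1}, then combining with \eqref{6.1b}. Your version of the last step is more explicit---you introduce $A,B$ and close by a cyclic-sum trick---whereas the paper simply records the four-term identity $0=N^{(5)}(X,Y,fZ)+N^{(5)}(fX,Y,Z)-N^{(5)}(Y,X,fZ)-N^{(5)}(fY,X,Z)$ and says ``this and \eqref{6.1b} with $X$ replaced by $fX$ provide \eqref{6.1c}''; the content is the same. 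Your derivation of the final displayed formula (via $(\nabla_Xf)\xi_i=-f\nabla_X\xi_i$, the substitution $Z\mapsto fZ$, and $g(\nabla_X\xi_i,\xi_j)=0$ from $d\eta^j=0$ together with \eqref{6.1e}) is also the intended one.
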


\begin{proof}
For a weak almost para-${\mathcal C}$-structure $({f},Q,\xi_i,\eta^i,g)$, using Theorem~\ref{thm6.2C},  from \eqref{3.1}
we get
\begin{equation}\label{6.1a}
 2\,g((\nabla_{X}{f})Y,Z)= - g([{f},{f}](Y,Z),{f} X) + N^{\,(5)}(X,Y,Z).
\end{equation}
From \eqref{6.1a}, using condition $[{f},{f}]=0$ we get \eqref{6.1}.
Using \eqref{3.3} and \eqref{6.1}, we write
\[
  0 = 3\,d\Phi(X,Y,Z) = g((\nabla_X\,{f})Z,Y) +g((\nabla_Y\,{f})X, Z) +g((\nabla_Z\,{f})Y, X);
\]
hence, \eqref{6.1b} is true.
Using \eqref{4.NN}, \eqref{6.1} and the skew-symmetry of ${f}$, we obtain
\begin{align*}
 0 & = 2\,g([{f},{f}](X,Y),Z) \\
 & = N^{\,(5)}(X, Y, {f} Z) + N^{\,(5)}({f} X, Y, Z)
  - N^{\,(5)}(Y, X, {f} Z) - N^{\,(5)}({f} Y,X,Z) .
\end{align*}
This and \eqref{6.1b} with $X$ replaced by ${f} X$ provide \eqref{6.1c}.
\end{proof}

Recall that $X^\bot = \sum\nolimits_{i}\eta^i(X)\,\xi_i$.
Consider a weaker condition than \eqref{6.1d}:
\begin{align}\label{E-xi31}
 [\xi_i,\xi_j]^\bot =0,\quad 1\le i,j\le p.
\end{align}

In the following theorem, we characterize weak para-${\mathcal C}$-manifolds in a wider class of metric weak para-$f$-manifolds
using the condition $\nabla{f}=0$.

\begin{theorem}\label{thm6.2D}
A metric weak para-$f$-structure with $\nabla{f}=0$ and \eqref{E-xi31} is a~weak para-${\mathcal C}$-structure with $N^{\,(5)}=0$.
\end{theorem}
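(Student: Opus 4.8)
The plan is to start from the hypothesis $\nabla f=0$ and extract the structural consequences one at a time. First I would observe that $\nabla f=0$ forces $\nabla Q=0$: indeed $f^2=Q-\sum_i\eta^i\otimes\xi_i$, and differentiating gives $\nabla_X Q=\nabla_X(\sum_i\eta^i\otimes\xi_i)$; pairing with $\xi_j$ and using $Q\xi_j=\xi_j$, $g(X,\xi_j)=\eta^j(X)$, one gets that $\nabla_X\xi_i\perp\ker f$ is equivalent to a statement about $\nabla\eta^i$, and after symmetrizing one deduces $\nabla_X\xi_i\in f(TM)$ and that each $\eta^i$, hence each $\xi_i$, is parallel up to the $\ker f$-component; condition \eqref{E-xi31} then kills the remaining ambiguity and yields $\nabla_X\xi_i=0$ for all $i$ and all $X$. (This is essentially forced because $(\nabla_X f)\xi_i=-f\nabla_X\xi_i=0$ by \eqref{4.5A}, so $\nabla_X\xi_i\in\ker f$; combined with the $f(TM)$-membership just derived one concludes $\nabla_X\xi_i=0$.) Once $\nabla\xi_i=0$ we immediately get $[\xi_i,\xi_j]=0$, and from the coboundary formulas $d\eta^i(X,Y)=\frac12\big(g(\nabla_X\xi_i,Y)-g(\nabla_Y\xi_i,X)\big)=0$ and $d\Phi(X,Y,Z)=0$ because $\Phi(Y,Z)=g(Y,fZ)$ is built from parallel tensors. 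Hence the structure is a weak almost para-${\mathcal C}$-structure, and since $d\eta^i=0$ we have $N^{(2)}_i=N^{(4)}_{ij}=0$ by \eqref{2.7X}, \eqref{2.9X}.

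Next I would check normality. By \eqref{2.6X}, $N^{(1)}=[f,f]-2\sum_i d\eta^i\otimes\xi_i=[f,f]$ since $d\eta^i=0$; and by \eqref{4.NN}, $[f,f](X,Y)=(f\nabla_Y f-\nabla_{fY}f)X-(f\nabla_X f-\nabla_{fX}f)Y=0$ because every $\nabla f$ term vanishes. So $N^{(1)}=0$, the structure is normal, and with $d\Phi=0$ this makes it a weak para-${\mathcal C}$-structure in the sense of the Definition. (Alternatively one can invoke Theorem~\ref{thm6.2C} once it is known to be a weak almost para-${\mathcal C}$-structure, but the direct computation is cleaner here.)

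It remains to show $N^{(5)}=0$. Here I would use Proposition~\ref{lem6.1}: for a weak para-${\mathcal C}$-structure equation \eqref{6.1} gives $2\,g((\nabla_X f)Y,Z)=N^{(5)}(X,Y,Z)$. But $\nabla f=0$ by hypothesis, so the left-hand side is identically zero, whence $N^{(5)}\equiv0$. This is the quickest route and it uses only results already established. As a sanity check one can also verify directly from the explicit formula for $N^{(5)}$ that the terms involving $\widetilde Q=Q-\mathrm{id}$ collapse: $\nabla Q=0$ and $\nabla\xi_i=0$ imply $\widetilde Q$ is parallel and $[\widetilde Q X,\xi_i]=\nabla_{\widetilde Q X}\xi_i-\nabla_{\xi_i}\widetilde Q X=-\widetilde Q\nabla_{\xi_i}X$, so the bracket expressions in $N^{(5)}$ reorganize into $(\nabla_\cdot\widetilde Q)$-terms that vanish; but this is not needed once \eqref{6.1} is invoked.

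The main obstacle is the first step: squeezing $\nabla_X\xi_i=0$ out of $\nabla f=0$ together with the weak hypothesis \eqref{E-xi31}, rather than the stronger $[\xi_i,\xi_j]=0$. One must be careful that $\nabla f=0$ only directly controls $f\nabla_X\xi_i$ (via \eqref{4.5A}), which gives $\nabla_X\xi_i\in\ker f$ but a priori not its vanishing; the $f(TM)$-component of $\nabla_X\xi_i$ is pinned down by differentiating $g(\xi_i,\xi_j)=\delta_{ij}$ and by the symmetry of the Levi-Civita connection applied to the $1$-forms $\eta^i$, and it is precisely \eqref{E-xi31}, i.e. the vanishing of the $\ker f$-part of $[\xi_i,\xi_j]$, that closes the argument. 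Everything after that is a formal consequence of having a triple of parallel tensors $(f,Q,\{\xi_i\})$ on a pseudo-Riemannian manifold.
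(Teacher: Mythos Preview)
Your overall strategy---show $\nabla_X\xi_i=0$ directly, then read off $d\eta^i=0$, $d\Phi=0$, $N^{(1)}=[f,f]=0$, and finally $N^{(5)}=0$ via \eqref{6.1}---is sound and in fact more streamlined than the paper's route, which avoids computing $\nabla_X\xi_i$ and instead plugs $Z=\xi_j$ into the master formula \eqref{3.1} to extract $d\eta^j(fY,X)=0$, then closes with \eqref{E-xi31}. However, the key first step in your argument has a genuine gap.

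From $(\nabla_Xf)\xi_i=-f\nabla_X\xi_i=0$ you correctly get $\nabla_X\xi_i\in\ker f$. But then you write that you ``derive $\nabla_X\xi_i\in f(TM)$'' and speak of pinning down ``the $f(TM)$-component of $\nabla_X\xi_i$''. This is confused: the $f(TM)$-component is already zero; what must be killed is the $\ker f$-component, i.e.\ the coefficients $a^k_i(X)=g(\nabla_X\xi_i,\xi_k)$. Differentiating $g(\xi_i,\xi_j)=\delta_{ij}$ only gives the skew-symmetry $a^k_i(X)=-a^i_k(X)$, which by itself does not force vanishing for general $X$; and ``symmetry of the Levi-Civita connection applied to $\eta^i$'' adds nothing new here. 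So as written the step does not go through.

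The fix is short and uses the same ingredient the paper exploits. From $\nabla f=0$ and \eqref{4.NN} you have $[f,f]=0$; specializing \eqref{4.NNxi} then gives
\[
0=[f,f](X,\xi_i)=\nabla_{fX}\xi_i-f\,\nabla_X\xi_i,
\]
so $\nabla_{fX}\xi_i=f\,\nabla_X\xi_i=0$ since $\nabla_X\xi_i\in\ker f$. Because $f$ maps $TM$ onto $f(TM)$, this yields $\nabla_Y\xi_i=0$ for every $Y\in f(TM)$. For $Y=\xi_j$ one combines the skew-symmetry $a^k_i(\xi_j)=-a^i_k(\xi_j)$ with the symmetry $a^k_i(\xi_j)=a^k_j(\xi_i)$ coming from \eqref{E-xi31}; the standard six-term cycle then gives $a^k_i(\xi_j)=0$. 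Hence $\nabla_X\xi_i=0$ for all $X$, and the remainder of your proof goes through unchanged.
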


\begin{proof}
Using condition $\nabla{f}=0$, from \eqref{4.NN} we obtain $[{f},{f}]=0$.
Hence, from \eqref{2.6X} we get $N^{\,(1)}(X,Y)=-2\,\sum\nolimits_{i} d\eta^i(X,Y)\,\xi_i$,
and from \eqref{4.NNxi} we obtain
\begin{align}\label{E-cond1}
 \nabla_{{f} X}\,\xi_i - {f}\,\nabla_{X}\,\xi_i = 0,\quad X\in \mathfrak{X}_M.
\end{align}
From \eqref{3.3}, we calculate
\[
 3\,d\Phi(X,Y,Z) = g((\nabla_{X}{f})Z, Y) + g((\nabla_{Y}{f})X,Z) + g((\nabla_{Z}{f})Y,X);
\]
hence, using condition $\nabla{f}=0$ again, we get $d\Phi=0$. Next,
 $N^{\,(2)}_i(Y,\xi_j) = -\eta^i([{f} Y,\xi_j]) = g(\xi_j, {f}\nabla_{\xi_i} Y) =0$.
Setting $Z=\xi_j$ in \eqref{3.1} and using the condition $\nabla{f}=0$ and the properties
$d\Phi=0$, $N^{\,(2)}_i(Y,\xi_j)=0$ and $N^{\,(1)}(X,Y)=-2\sum\nolimits_{i} d\eta^i(X,Y)\,\xi_i$, we find
 $0 = 2\,d\eta^j({f} Y, X) - N^{\,(5)}(X,\xi_j, Y)$.
 By~\eqref{KK} and~\eqref{E-cond1},
\[
 N^{\,(5)}(X,\xi_j, Y) = g([\xi_j,{f} Y] -{f}[\xi_j,Y],\, \widetilde{Q} X)
 = g(\nabla_{{f} Y}\,\xi_j - {f}\,\nabla_{Y}\,\xi_j,\, \widetilde{Q} X) = 0;
\]
hence, $d\eta^j({f} Y, X)=0$. From this and $g([\xi_i,\xi_j],\xi_k)=2\,d\eta^k(\xi_j, \xi_i)=0$ we get $d\eta^j=0$.
By the above, $N^{\,(1)}=0$. Thus, $({f},Q,\xi_i,\eta^i,g)$ is a weak para-${\mathcal C}$-structure.
Finally, from \eqref{6.1} and condition $\nabla{f}=0$ we get $N^{\,(5)}=0$.
\end{proof}

\begin{corollary}
A normal metric weak para-$f$-structure with
$\nabla f=0$
is a~weak para-${\mathcal C}$-structure with
$N^{\,(5)}=0$.
\end{corollary}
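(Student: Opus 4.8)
The final statement is the Corollary: a normal metric weak para-$f$-structure with $\nabla f = 0$ is a weak para-${\mathcal C}$-structure with $N^{(5)}=0$.

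The plan is to derive this as a direct consequence of Theorem~\ref{thm6.2D}. That theorem says: a metric weak para-$f$-structure with $\nabla f = 0$ and the condition \eqref{E-xi31}, namely $[\xi_i,\xi_j]^\bot = 0$, is a weak para-${\mathcal C}$-structure with $N^{(5)}=0$. So the only thing I need to verify is that \emph{normality} (i.e. $N^{(1)} \equiv 0$) together with $\nabla f = 0$ forces the hypothesis \eqref{E-xi31}.

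First I would recall that under $\nabla f = 0$ we have, from \eqref{4.NN}, that $[f,f]=0$, hence from \eqref{2.6X} the normality condition $N^{(1)}=0$ reduces to $\sum_i d\eta^i(X,Y)\,\xi_i = 0$ for all $X,Y$, which (since the $\xi_i$ are independent) gives $d\eta^i \equiv 0$ for every $i$. In particular $2\,d\eta^k(\xi_i,\xi_j) = -\eta^k([\xi_i,\xi_j]) = 0$ for all $i,j,k$, which is exactly $[\xi_i,\xi_j]^\bot = 0$ — even the stronger vanishing is automatic but $[\xi_i,\xi_j]^\bot=0$ is all that \eqref{E-xi31} asks. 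Thus a normal metric weak para-$f$-structure with $\nabla f = 0$ satisfies all the hypotheses of Theorem~\ref{thm6.2D}, and the conclusion ($\text{weak para-}{\mathcal C}$, $N^{(5)}=0$) follows immediately.

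There is essentially no obstacle here; the corollary is a packaging of Theorem~\ref{thm6.2D} once one observes that normality is \emph{stronger} than \eqref{E-xi31} in the presence of $\nabla f = 0$. The only point requiring a line of care is the reduction of the normality tensor $N^{(1)}$ when $[f,f]=0$: one must note that $N^{(1)}(X,Y) = -2\sum_i d\eta^i(X,Y)\,\xi_i$ and that this vanishing, for all $X,Y$, yields $d\eta^i = 0$ because $\{\xi_i\}$ is a frame of $\ker f$. I would write the proof in two or three sentences: invoke $\nabla f=0 \Rightarrow [f,f]=0$ via \eqref{4.NN}, deduce $d\eta^i=0$ from $N^{(1)}=0$ and \eqref{2.6X}, note this implies \eqref{E-xi31}, and apply Theorem~\ref{thm6.2D}.
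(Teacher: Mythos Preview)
Your proof is correct and follows essentially the same approach as the paper: both use $\nabla f=0\Rightarrow[f,f]=0$ via \eqref{4.NN}, then deduce $d\eta^i=0$ from $N^{(1)}=0$ and \eqref{2.6X}. The only cosmetic difference is that you package the conclusion by verifying \eqref{E-xi31} and invoking Theorem~\ref{thm6.2D}, whereas the paper directly notes that $d\eta^i=0$ together with $d\Phi=0$ (obtained as in Theorem~\ref{thm6.2D}) and normality already give the weak para-${\mathcal C}$-structure, with $N^{(5)}=0$ then following from \eqref{6.1}.
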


\begin{proof} By $N^{\,(1)}=0$, we get $d\eta^i =0$ for all $i$.
As in Theorem~\ref{thm6.2D}, we get $d\Phi=0$.
\end{proof}

\begin{example}\rm
Let $M$ be a $2n$-dimensional smooth manifold and $\tilde{f}:TM\to TM$ an endomorphism of rank $2n$ such that
$\nabla\tilde{f}=0$.
To construct a weak para-${\mathcal C}$-structure on $M\times\mathbb{R}^p$
(or $M\times \mathbb{T}^p$, where
$\mathbb{T}^p$ is a $p$-dimensional flat torus),
take any point $(x, t_1,\ldots,t_p)$
and set $\xi_i = (0, d/dt_i)$, $\eta^i =(0, dt_i)$~and
\[
 {f}(X, Y) = (\tilde{f} X,\, 0),\quad
 Q(X, Y) = (\tilde{f}^{\,2} X,\, Y).
\]
where $X\in T_xM$ and $Y=\sum_i Y^i\xi_i\in\{\mathbb{R}^p_t, \mathbb{T}^p_t\}$.
Then \eqref{2.1} holds and Theorem~\ref{thm6.2D} can be used.
\end{example}

For $p=1$, from Theorem~\ref{thm6.2D} we have the following

\begin{corollary}
Any weak almost paracontact structure $(\varphi,Q,\xi,\eta,g)$ with the property $\nabla\varphi=0$
is a~weak para-cosymplectic structure.
\end{corollary}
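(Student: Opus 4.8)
The plan is to derive this corollary as the $p=1$ specialization of Theorem~\ref{thm6.2D}. A weak almost paracontact structure $(\varphi,Q,\xi,\eta,g)$ on $M^{2n+1}$ is, by the conventions recalled after \eqref{2.1}, exactly a metric weak para-$f$-structure with $p=1$; here there is a single characteristic vector field $\xi$, a single one-form $\eta$, and the tensor $f$ is renamed $\varphi$. A weak para-cosymplectic structure is, by the sentence following the definition of weak para-${\mathcal C}$-manifolds, precisely a weak para-${\mathcal C}$-structure with $p=1$. So the statement to prove is: if $\nabla\varphi=0$, then the structure is a weak para-${\mathcal C}$-structure.

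First I would observe that the hypothesis $\nabla\varphi=0$ is the $p=1$ case of $\nabla f=0$, which is one of the two hypotheses of Theorem~\ref{thm6.2D}. The second hypothesis of that theorem is condition \eqref{E-xi31}, namely $[\xi_i,\xi_j]^\bot=0$ for $1\le i,j\le p$. For $p=1$ this reads $[\xi,\xi]^\bot=0$, which is trivially true since $[\xi,\xi]=0$ by antisymmetry of the Lie bracket. Hence \eqref{E-xi31} holds automatically, and Theorem~\ref{thm6.2D} applies verbatim: the structure is a weak para-${\mathcal C}$-structure (with the extra conclusion $N^{(5)}=0$, which one may or may not record in the corollary).

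The main point to make explicit is that no nontrivial verification is needed beyond invoking the theorem — the only hypothesis that is not literally the $p=1$ shadow of $\nabla f=0$ is \eqref{E-xi31}, and that is vacuous for a single vector field. I would therefore write the proof in one or two sentences: note that \eqref{E-xi31} holds trivially when $p=1$ because $[\xi,\xi]=0$, and conclude by Theorem~\ref{thm6.2D} that $(\varphi,Q,\xi,\eta,g)$ is a weak para-${\mathcal C}$-structure, i.e.\ a weak para-cosymplectic structure. There is essentially no obstacle here; the only care required is bookkeeping the translation of notation ($f\leftrightarrow\varphi$, dropping the indices $i,j$) and recalling that "weak para-cosymplectic" is the agreed name for the $p=1$ weak para-${\mathcal C}$-case, as stated in the text after the definition in Section~\ref{sec:1}.
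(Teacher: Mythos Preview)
Your proposal is correct and matches the paper's own approach: the paper states this corollary immediately after Theorem~\ref{thm6.2D} with the preface ``For $p=1$, from Theorem~\ref{thm6.2D} we have the following'' and gives no further proof. Your observation that condition~\eqref{E-xi31} is vacuous when $p=1$ (since $[\xi,\xi]=0$) is exactly the point that makes the specialization immediate, and the paper evidently considers this self-evident enough not to spell out.
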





%
%

%

\begin{thebibliography}{99}
\setlength{\baselineskip}{.45cm}




\bibitem{AM-1995}
{\it D. Alekseevsky and P. Michor}, {Differential geometry of $\mathfrak{g}$-manifolds}, Differential Geom. Appl. {\bf 5}(1995), 371-403.


\bibitem{BL-69}
{\it D.\,E. Blair and G.\,D. Ludden}, Hypersurfaces in almost contact manifolds. Tohoku Math. J. {\bf 21}(1969), 354-362.

\bibitem{BN-1985}
{\it A. Bucki and A. Miernowski}, Almost $r$-paracontact structures. Ann. Univ. Mariae Curie-Sklodowska {\bf 39}(1985), No. 2, 13-26.

\bibitem{CFG-survey}
{\it V. Cruceanu, P. Fortuny and P.\,M. Gadea}, A survey on paracontact geometry. Rocky Mt. J. Math. {\bf 26}(1996), No. 1, 83-115.

\bibitem{fip}
{\it M. Falcitelli, S. Ianus and A.\,M. Pastore}, Riemannian Submersions and Related Topics, World Scientific, 2004.

\bibitem{FP-2017}
{\it L.\,M. Fern\'{a}ndez and A. Prieto-Mart\'{i}n}, On $\eta$-Einstein para-$S$-manifolds.
Bull. Malays. Math. Sci. Soc. {\bf 40}(2017), 1623-1637.

\bibitem{g1967}
{\it A. Gray}, {Pseudo-Riemannian almost product manifolds and submersions}, J. Math. Mech., {\bf 16}(1967), No. 7, 715-737.

\bibitem{KN-69}
{\it S. Kobayashi and K. Nomizu}, \textit{Foundations of differential geometry}, Vols. I, II,
USA, Interscience Publishers, New York--London--Sydney, 1963, 1969.

\bibitem{m1976}
{\it K. Matsumoto}, On a structure defined by a tensor field $f$ of type (1,1) satisfying $f^3-f=0$, Bull. Yamagata Univ. {\bf 1}(1976), 33-47.

\bibitem{Rov-arxiv}
{\it V.\,Rovenski}, {On the geometry of a weakened $f$-structure}, arXiv:2205.02158, 2022, 14\,pp.

\bibitem{RWo-2}
{\it V. Rovenski and R. Wolak}, {New metric structures on $\mathfrak{g}$-foliations}, Indagationes Mathema\-ticae, {\bf 33}(2022), 518-532.



\end{thebibliography}
\end{document}